\numberwithin{equation}{section}
\newtheorem {theorem} {Theorem}
\newtheorem {proposition} [theorem]{Proposition}
\newtheorem {lemma}  [theorem]{Lemma}
\theoremstyle{remark}
\theoremstyle{definition}
\newtheorem {remark} [theorem]{Remark}
\begin{document}
\setlength{\parindent}{4ex}
\setlength{\parskip}{1ex}
\setlength{\oddsidemargin}{12mm}
\setlength{\evensidemargin}{9mm}

\title
{Abelian integrals and limit cycles for a class of cubic polynomial
vector fields of Lotka-Volterra type with a rational first integral of degree 2}
\begin{figure}[b]
\rule[-0.5ex]{7cm}{0.2pt}\\\footnotesize $^{*}$Corresponding author.
\\E-mail address: cenxiuli2010@163.com (X. Cen),
mcszyl@mail.sysu.edu.cn (Y. Zhao) and haiihuaa@tom.com (H. Liang).\\
\end{figure}
\author
{ {Xiuli Cen$^{a,*}$, Yulin Zhao$^{a}$ and Haihua Liang$^{b}$}\\
{\footnotesize\it $^{a}$ Department of Mathematics, Sun Yat-sen
University, Guangzhou, 510275, P.R.China}\\
{\footnotesize\it $^{b}$ Department of Computer Science, Guangdong Polytechnic Normal University,}\\
{\footnotesize\it Guangzhou, 510665, P.R.China}}
\date{}
\maketitle {\narrower \small \noindent {\bf Abstract\,\,\,} In this
paper, we study the number of limit cycles which bifurcate
from the periodic orbits of cubic polynomial vector fields of Lotka-Volterra
type having a rational first integral of degree 2, under polynomial perturbations of degree $n$.
The analysis is carried out by estimating the number of zeros of the corresponding
Abelian integrals. Moreover, using \emph{Chebyshev
criterion}, we show that the sharp upper bound for the number of zeros of the Abelian integrals defined on each period annulus is 3 for $n=3$.  The simultaneous bifurcation and
distribution of limit cycles for the system with two period
annuli under cubic polynomial perturbations are considered. All configurations $(u,v)$ with $0\leq u, v\leq 3, u+v\leq5$ are realizable.

Mathematics Subject Classification: Primary 34C05, 34A34, 37C07.}

Keywords: Limit cycles; Cubic polynomial vector fields; Abelian integrals; Chebyshev
criterion; Simultaneous bifurcation and distribution.

\section{Introduction and statement of the main results}
As is known, in the study of the qualitative theory of real planar differential systems, one of the important
open problems is the determination of limit cycles. The second part of the famous Hilbert's 16th problem, proposed in 1900, asks for an upper bound on Hilbert number $H(n)$ and position of limit cycles for all planar polynomial differential systems of degree $n$, but it is still open even for $n=2$. It is so difficult that the weak form of this problem has been introduced. A classical way to obtain limit cycles is that perturbing the periodic orbits of a center. Let us consider the planar polynomial vector fields $X_{\varepsilon}=X_{0}+\varepsilon Y$, where $0<\varepsilon\ll1$ and $X_{0}=(-H_{y}/R,H_{x}/R)$ is a polynomial vector field having a continuum of periodic orbits. $H$ is a first integral of $X_{0}$ and $R$ is an integrating factor. If $R=1$, we call $X_{0}$ a Hamiltonian vector field, otherwise, we say that it's a non-Hamiltonian integrable vector field. In order to study the periodic orbits of $X_{\varepsilon}$ that remain among all the periodic orbits of $X_{0}$, it is necessary to study the number of zeros of an (generalized) Abelian integral, also known as  the first order Melnikov function, i.e.,
\begin{equation}
M(h)=\displaystyle\oint_{H=h}R\left(Y_{1}(x,y)\mathrm{d}y-Y_{2}(x,y)\mathrm{d}x\right),
\end{equation}
where $\{H=h,h\in(h_{0},h_{1})\}$ are periodic orbits of $X_{0}$, and $Y_{i}(x,y),i=1,2$, are the two components of $Y$.

To the best of our knowledge, many authors have investigated the limit cycles for the quadratic Hamiltonian systems and non-Hamiltonian integrable systems under polynomial perturbations (e.g., \cite{CLY,CLP,GGJ,HI,I,LLLZ,LLLZ1,L,Z} and references therein). However, the studies on cubic and higher degree systems are relatively few (e.g., \cite{AZ,BL1,LLLZ,LLLZ2,LLM,YH,ZZ}).

In this paper, we concern with the number of zeros of Abelian integrals for a class of cubic non-Hamiltonian integrable systems of Lotka-Volterra type with a rational first integral of degree 2, the integrating factor of which consists of $x$ and $y$. In general, it is difficult to study the bifurcation of limit cycles for a non-Hamiltonian integrable systems with integrating factors including  both the variables $x$ and $y$, thus the technique and the results are few (e.g., \cite{AZ,BL1,LLLZ2}).

The authors \cite{CL} first studied the planar cubic polynomial
vector fields of Lotka-Volterra type with a rational first integral
of degree 2 and got 28 non-topologically equivalent phase portraits,
among which there are only 2 cases having at least one center in the finite plane,
as shown in Figure 1 (see Figure 1 of \cite{CL}).

\begin{figure}[h]
\centering \epsfig{file=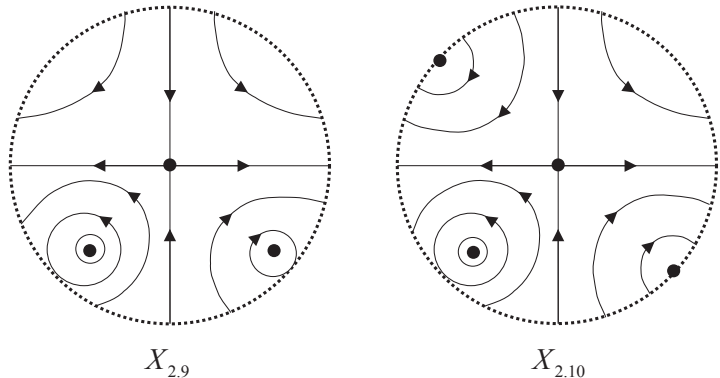,totalheight=50mm,width=95mm,angle=0}
\setlength{\abovecaptionskip}{0pt}%
\renewcommand{\figurename}{Figure}
\caption{All the non-topologically equivalent phase portraits with at least one center
of a planar cubic polynomial vector field of Lotka-Volterra type with a rational first
integral of degree 2.} \centering
\label{Fig.1}
\end{figure}

The system with a center in \cite{CL} is a differential system of the form
\begin{equation}\begin{array}{ll}\label{eq11}
\dot{x}=x(1+bx+x^2-y^2),\\[2ex]
\dot{y}=y(-1-cy+x^2-y^2),
\end{array}\end{equation}
which has a first integral
\begin{equation*}\vspace{.05in}
H=\dfrac{1+bx+cy+x^2+y^2}{xy}
\end{equation*}
with integrating factor $x^{-2}y^{-2}$.

When $0\leq b<2$, $0\leq
c<2$ and $b\neq c$, the phase portrait of system \eqref{eq11} corresponds to
$X_{2.9}$ in Figure 1; when $0<b=c<2$, the phase portrait of system \eqref{eq11}
is $X_{2.10}$ in Figure 1. For convenience, we will also denote the two subclasses of system \eqref{eq11}
by $X_{2.9}$ and $X_{2.10}$, respectively.

System $X_{2.9}$ has three finite singular points: a
hyperbolic saddle at $O (0,0)$ and two centers at $C_{\pm} (\delta(-b\delta\pm
c\gamma)/2(\delta^2-\gamma^2),\gamma(-c\gamma\pm
b\delta)/2(\gamma^2-\delta^2))$, where
$\gamma=\sqrt{4-b^2}, \delta=\sqrt{4-c^2}$. $C_{+}$ is located in the third
quadrant while $C_{-}$ is located in the fourth (resp. second) quadrant if $b<c$ (resp. $b>c$). If $b>c$, taking a change
$(x,y,t,b,c)\rightarrow (y,x,-t,c,b)$, system $X_{2.9}$ is
reduced to that in the case $b<c$. Thus without loss of generality, it suffices to
consider the latter case. $H=h_{\pm}=(-bc\pm\gamma\delta)/2$ correspond to
$C_{\pm}$. There are two families of periodic orbits
$\Gamma_{h}:H=h$, $h\in(-2,h_{-})\cup(h_{+},2)$,
which surround the centers $C_{-}$ and $C_{+}$, respectively.

System $X_{2.10}$ has two finite singular points. $O (0,0)$ is a
hyperbolic saddle and $C (-1/b,-1/b)$ is a center, which is located in the
third quadrant. There is a family of periodic orbits
$\Gamma_{h}:H=h$, $h\in(2-b^2,2)$, which surrounds the center $C$. $H=2-b^2$
is the value of the center.

In what follows we are going to study the polynomial perturbations of these two subclasses of system \eqref{eq11}:
\begin{equation}\begin{array}{ll}\label{eq12}
\dot{x}=x(1+bx+x^2-y^2)+\varepsilon f(x,y),\\[2ex]
\dot{y}=y(-1-cy+x^2-y^2)+\varepsilon g(x,y),
\end{array}\end{equation}
where $f(x,y)$ and $g(x,y)$ are polynomials in the variables $x$ and
$y$ with $\mbox{max}\{\deg f$, $\deg g\}=n$.

Consider \eqref{eq12} with $0<\varepsilon\ll1$. Let $H_{X_{2.9}}(n)$ and $H_{X_{2.10}}(n)$ be the maximum number of zeros (taking into account their
multiplicity) of the Abelian integrals associated with the two systems:
\[
\begin{array}{ll} \vspace{.1in}
X_{2.9}, & \hbox{i.e.,  $0\leq b<c<2$,}\\
X_{2.10},&  \hbox{i.e., $0<b=c<2$, }
\end{array}
\]
respectively. Note that system $X_{2.9}$ has two period annuli and here $H_{X_{2.9}}(n)$ denotes the maximum number of zeros of Abelian integral $M(h)$ on
the interval $(-2,h-)$ or $(h_+,2)$.

The main results of this paper are:

\begin{theorem}\label{th1}
$H_{X_{2.9}}(n)\leq1$ if $n\leq 2$ and $H_{X_{2.9}}(n)\leq2n-3$ if $n\geq 3$.
\end{theorem}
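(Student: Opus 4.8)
The plan is to exploit the fact that the first integral has degree $2$, so that every level curve $\Gamma_h=\{H=h\}$ is the conic $x^2-hxy+y^2+bx+cy+1=0$. For $h$ in the relevant intervals the quadratic form $x^2-hxy+y^2$ is positive definite ($|h|<2$), so $\Gamma_h$ is an ellipse; and since the coordinate axes $x=0$ and $y=0$ are invariant lines of the unperturbed field, each oval lies entirely in a single open quadrant, where the integrating factor $x^{-2}y^{-2}$ is smooth and of constant sign. Thus $M(h)$ is the loop integral of a rational $1$-form over an oval on a genus-zero curve, with no singularity of the integrand on the curve itself — a much more tractable situation than a generic algebraic oval.

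First I would reduce $M(h)$ to an $\mathbb{R}[h]$-linear combination of finitely many generating integrals. Expanding $f$ and $g$ in monomials, $M(h)$ becomes a linear combination of $\oint_{\Gamma_h}x^{i}y^{j}\,\mathrm{d}y$ and $\oint_{\Gamma_h}x^{i}y^{j}\,\mathrm{d}x$ with $i,j\ge-2$ and $i+j\le n-2$. Two families of relations cut this collection down to a fixed finite set. The first comes from differentiating $H=h$ along the curve, which yields $(2x+b-hy)\,\mathrm{d}x+(2y+c-hx)\,\mathrm{d}y=0$ on $\Gamma_h$; together with the defining equation of the conic (which lets one replace any factor $x^2$ or $y^2$ by lower terms, at the cost of introducing the parameter $h$ through the cross term $hxy$), this eliminates high monomial powers. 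The second is integration by parts, $\oint_{\Gamma_h}\mathrm{d}(x^{p}y^{q})=0$, which interchanges the $\mathrm{d}x$- and $\mathrm{d}y$-forms and disposes of the spurious negative powers coming from $x^{-2}y^{-2}$. Iterating these reductions expresses $M(h)=\sum_k P_k(h)\,I_k(h)$ for a small fixed generating set $\{I_k\}$, with the $h$-degree of each $P_k$ growing linearly in $n$, since each use of the cross term trades one unit of monomial degree for one unit of $h$-degree.

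Next I would bound the number of zeros of such a combination. The generators $I_k(h)$ are periods of rational forms on a conic, so they satisfy a first-order linear Picard–Fuchs system in $h$ with polynomial coefficients; from this one either derives a linear ODE for $M(h)$ whose coefficient degrees are controlled by $n$, or one verifies that the $I_k$ (and, together with the monomials $h^\ell I_k$, the enlarged family) form a Chebyshev system on each period annulus. Either route gives an additive count: the number of zeros is at most $\bigl(\sum_k\deg P_k\bigr)+(\text{number of generators})-1$. Carrying out the degree bookkeeping from the reduction then produces $H_{X_{2.9}}(n)\le 2n-3$ for $n\ge3$, while for $n\le2$ the generating set collapses and the count drops to $H_{X_{2.9}}(n)\le1$; the sharp value $3$ at $n=3$ is the boundary case handled separately by the full Chebyshev criterion.

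The main obstacle will be the degree control in the reduction. Tracking the exact $h$-degrees of the coefficient polynomials $P_k$ through the recursive use of the cross term $hxy$ is delicate, and it is precisely this bookkeeping — rather than the existence of the reduction — that separates the sharp linear bound $2n-3$ from a weaker quadratic estimate. A secondary difficulty is establishing the Chebyshev (or Picard–Fuchs) structure that legitimizes the clean additive zero count: this requires the nonvanishing of several Wronskian-type determinants built from the generators $I_k$, which is typically the most computation-heavy step and is where the distinction between the two quadrants (the two signs $b<c$) and the explicit values $h_\pm=(-bc\pm\gamma\delta)/2$ must be used.
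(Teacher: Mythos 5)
Your framing is sound --- the ovals are conic arcs lying in a single open quadrant, the integrand is regular there, and a reduction of $M(h)$ to finitely many generating integrals with polynomial coefficients in $h$ is indeed possible --- but the proposal stops exactly where the quantitative work has to start, and the zero-counting principle you invoke is not the one that actually closes the argument. The paper does not set up a Picard--Fuchs system, nor does it verify a Chebyshev property for a family whose size grows with $n$ (that would require Wronskian computations you explicitly defer). Instead it solves the conic for $y$ explicitly, $y=(hx-c)/2\pm\sqrt{\Delta(x,h)}$ with $\Delta$ quadratic in $x$, writes each $\oint_{\Gamma_h} x^{i-2}y^{j-2}\,\mathrm{d}x$ as a definite integral over $[x_1,x_2]$ of the difference of the two branches, and evaluates everything in closed form: the nonnegative powers of $y$ reduce to $J_k(h)=\int_{x_1}^{x_2}x^{k-2}\sqrt{\Delta}\,\mathrm{d}x=(4-h^2)^{-(k-1/2)}P_k(h)$, while the $y^{-1}$ and $y^{-2}$ terms --- precisely where your integration-by-parts step breaks down, since $\oint x^{i-2}y^{-1}\mathrm{d}y$ produces logarithms; one must instead use the curve equation and accept the new denominator $x^2+bx+1$ --- are computed as explicit combinations of rational functions of $h$ and $\sqrt{4-h^2}$. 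The $\mathrm{d}y$-integrals are handled not by parts but by the symmetry $x\leftrightarrow y$, $b\leftrightarrow c$, which maps them to integrals of the same type over the reflected curve $\Gamma_h'$. The upshot is the exact global form $M(h)=Q_2(h)+Q_{2n-5}(h)(4-h^2)^{-(n-7/2)}$ for $n\ge3$, and the bound follows by differentiating three times to annihilate $Q_2$ and applying Rolle; no Chebyshev argument is needed for general $n$ (it enters only for the sharpness at $n=3$).

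Two concrete gaps remain in your version. First, the final bound uses essentially that $M$ vanishes at the center value $h_\pm$, i.e.\ at an endpoint of each open interval: the paper's count is $2n-5+3-1=2n-3$, and for $n\le2$ the same fact turns ``at most $2$ zeros of a quadratic'' into ``at most $1$ zero per annulus.'' Your additive formula $\sum_k\deg P_k+\#\{I_k\}-1$ never invokes this vanishing, so even granting everything else it is not clear how you reach $2n-3$ rather than $2n-2$, or $1$ rather than $2$. Second, and more fundamentally, the two ingredients you yourself flag as delicate --- the $h$-degree bookkeeping through the reduction, and the nonvanishing of the Wronskian-type determinants (or the derivation of a controlled ODE) that would legitimize the additive count --- constitute the entire quantitative content of the theorem, and neither is carried out. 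As written the proposal is a plausible strategy outline but establishes no explicit bound.
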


\begin{theorem}\label{th2}
$H_{X_{2.10}}(n)\leq1$ if $n\leq 2$ and $H_{X_{2.10}}(n)\leq[(3n-3)/2]$ if $n\geq 3$.
\end{theorem}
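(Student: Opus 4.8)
The plan is to exploit the extra symmetry enjoyed by $X_{2.10}$. Since $b=c$, the first integral satisfies $H(x,y)=H(y,x)$, so every oval $\Gamma_h$ is invariant under the reflection $(x,y)\mapsto(y,x)$ and the center $C(-1/b,-1/b)$ lies on the diagonal $y=x$. I would first pass to the coordinates $u=x+y$, $w=x-y$, in which this reflection becomes $w\mapsto-w$ and, using $H=h$, the oval becomes the ellipse $(2-h)u^2+(2+h)w^2+4bu+4=0$; in particular $w^2=\phi_h(u)$ with $\phi_h$ quadratic, and the crucial identity $xy=(u^2-w^2)/4=\psi(u)/(2+h)$ holds on $\Gamma_h$ with $\psi(u)=u^2+bu+1$ \emph{independent of $h$}. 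Writing the perturbation as $f\,dy-g\,dx=\tfrac12[(f-g)\,du-(f+g)\,dw]$ and recalling that the integrating factor is $x^{-2}y^{-2}=16/(u^2-w^2)^2$, the reflection symmetry annihilates all terms of the wrong parity: only the $w$-odd part of $f-g$ survives against $du$, and only the $w$-even part of $f+g$ survives against $dw$.

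Next I would reduce $M(h)$ to finitely many generating integrals. Substituting $w^2=\phi_h(u)$ and $u^2-w^2=4\psi(u)/(2+h)$, each surviving monomial $u^iw^j$ collapses to an integral over the segment $[u_-,u_+]$ between the two zeros $u_\pm$ of $\phi_h$, of the form $\int_{u_-}^{u_+} u^k\,\psi(u)^{-2}\,\chi_h(u)^{\pm1/2}\,du$, where $\chi_h(u)=(h-2)u^2-4bu-4$ is the rescaled quadratic whose roots are $u_\pm$. Because $\Gamma_h$ is a conic, hence rational, these are complete integrals of rational forms, and each equals $\pi$ times an algebraic function of $h$. I would then trim the index $k$ and the two sign choices to a minimal set: dividing $u^k$ by $\psi^2$ reduces $k$ modulo $\deg\psi^2=4$; the moment recursion coming from $\int_{u_-}^{u_+}\frac{d}{du}\bigl(u^{m}\sqrt{\chi_h}\bigr)\,du=0$ collapses the polynomial part to the single period $\int_{u_-}^{u_+}du/\sqrt{\chi_h}=\pi/\sqrt{2-h}$; and integration by parts against the exact forms $d(\psi^{-k}\sqrt{\chi_h})$ links the $\chi_h^{1/2}$ and $\chi_h^{-1/2}$ families. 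The outcome is an expression $M(h)=\sum_k\alpha_k(h)\,I_k(h)$ in which the number of independent generators $I_k$ is $[(3n-3)/2]+1$, the halving relative to the bound $2n-3$ of Theorem~\ref{th1} being precisely the effect of the parity constraint imposed by the $x\leftrightarrow y$ symmetry.

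Finally, I would convert the count into a zero bound. Since each $I_k(h)$ is $\pi$ times an explicit algebraic function built from the radicals $\sqrt{2-h}$, $\sqrt{b^2+h-2}$ and $\psi$-type factors, the generators satisfy a Picard--Fuchs system, and a nontrivial combination $\sum_k\alpha_k(h)I_k(h)$ vanishes only where a single polynomial—obtained after clearing those finitely many radicals—vanishes, whose degree is controlled by the number of generators; by an argument-principle (or Rolle) count this yields $H_{X_{2.10}}(n)\le[(3n-3)/2]$ for $n\ge3$. The low-degree cases $n\le2$ leave at most two generators and hence at most one zero, which I would verify by direct evaluation. The main obstacle is the reduction-and-count step: one must check that the integration-by-parts relations neither lose nor create generators, and in particular track the parity split carefully enough to land on the floor $[(3n-3)/2]$ rather than a weaker linear bound. (The sharp value $3$ at $n=3$ advertised in the abstract is then obtained separately by verifying, via the Chebyshev criterion, that the surviving generators form an extended complete Chebyshev system on $(2-b^2,2)$.)
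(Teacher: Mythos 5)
Your setup is sound and actually close in spirit to the paper's: the paper also exploits the $b=c$ symmetry (its Proposition 8 reduces $M(h)$ to the single family $I_{ij}(h)=\oint_{\Gamma_h} x^{i-2}y^{j-2}\,\mathrm{d}x$), and your $(u,w)$-coordinates with the identity $xy=\psi(u)/(2+h)$ are a clean repackaging of the paper's explicit substitution $y=(hx-b)/2\pm\sqrt{\Delta}$. The genuine gap is in the final step, where the bound is supposed to be produced. The bound $[(3n-3)/2]$ does not follow from counting generators: even if your reduction really lands on $[(3n-3)/2]+1$ independent integrals, ``number of generators minus one zeros'' is precisely the Chebyshev property, which you would then have to prove for every $n$ (the paper proves it only for $n=3$, in Section 4, and even there it fails on part of the interval when $b>\sqrt2$). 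Your fallback --- that clearing the radicals yields a single polynomial whose degree is controlled by the number of generators --- is false as stated: eliminating $\sqrt{2-h}$ and $\sqrt{2+h}$ from $\sum_k\alpha_k(h)I_k(h)=0$ by conjugation roughly doubles degrees and introduces spurious roots, so it yields a weaker bound, not $[(3n-3)/2]$.

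What actually produces the floor $[(3n-3)/2]$ is degree bookkeeping that your reduction skips. When $b=c$, Vi\`ete's formulas give $x_1+x_2=-2b/(2-h)$ (constant numerator, denominator $(2-h)$ only) while $x_1x_2=(4-b^2)/(4-h^2)$; tracking this through the symmetric-function expansion of the $J_k(h)$ shows that the power of $(2+h)$ in the denominators grows only half as fast as that of $(2-h)$, and one arrives at the closed form $M(h)=Q_2(h)+Q_{[(3n-7)/2]}(h)\bigl(\sqrt{4-h^2}\,(2-h)^{n-4}(2+h)^{[(n-5)/2]}\bigr)^{-1}$ with $\deg Q_{[(3n-7)/2]}\le[(3n-7)/2]$. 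The bound then follows by differentiating three times (which kills $Q_2$ and leaves a numerator of degree at most $[(3n-7)/2]$ over a factor that does not vanish on $(2-b^2,2)$), applying Rolle three times, and subtracting one for the forced zero $M(2-b^2)=0$: $[(3n-7)/2]+3-1=[(3n-3)/2]$. Your moment recursion does shrink the set of generators, but the rational coefficients it produces have $h$-degrees growing with $n$, and it is those degrees --- not the generator count --- that must be bounded. Until you carry out that bookkeeping (in particular, exhibit why $(2+h)$ appears to only half the power of $(2-h)$) and replace the ``argument-principle count'' with a concrete derivative-plus-Rolle argument, the claimed bound is not established; you flag this reduction-and-count step yourself as the main obstacle, and it is indeed where the proof currently is missing.
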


\begin{remark}
By Proposition 1 of \cite{LLLZ2}, it is easy to
verify that the systems $X_{2.9}$ and $X_{2.10}$ can be transformed into $S^{*}$:
\begin{equation*}\begin{array}{ll}
\dot{x}=-y+\beta x^2-2\alpha xy-\beta y^2+x^2y,\\[2ex]
\dot{y}=x+\alpha x^2+2\beta xy-\alpha y^2+xy^2,
\end{array}\end{equation*}
in Table I of \cite{LLLZ2}. Thus the centers of these two systems are isochronous.
Moreover, it follows from the first integral that system $X_{2.10}$ is
reversible (with respect to the straight line $y=x$) while system $X_{2.9}$ is not. In fact, by the affine
transformation
\[(x,y,t)\rightarrow\left(y-x,\dfrac{b(x+y)+2}{\sqrt{4-b^2}},-\dfrac{\sqrt{4-b^2}}{b}t\right),
\]
system $X_{2.10}$ is transformed into $S^{*}$ with
$\alpha=0$, $\beta=-\sqrt{4-b^2}/2$, which belongs to the reversible case (A) of \cite{LLLZ2}.
As a special one, our result
is better than that of the general case (see Theorem 2 of \cite{LLLZ2}).
Similarly, by the affine
transformation
\[(x,y,t)\rightarrow\left(\dfrac{x+y-\xi-\eta}{\sqrt{1-(\xi+\eta)^2}},\dfrac{y-x+\xi-\eta}{\sqrt{(\xi-\eta)^2-1}},-\sqrt{4\xi^2\eta^2-(\xi^2+\eta^2-1)^2}t\right),
\]
system $X_{2.9}$ is reduced to $S^{*}$ with
$\alpha=(\xi+\eta)\sqrt{1-(\xi+\eta)^2}/(4\xi\eta)$,
$\beta=(\xi-\eta)\sqrt{(\xi-\eta)^2-1}/(4\xi\eta)$, where
$\xi=-\delta\zeta$, $\eta=\gamma\zeta$,
$\zeta=(b\delta+c\gamma)/2(\delta^2-\gamma^2)$. Since $\alpha\beta\neq
0$, system $X_{2.9}$ is not reversible. The problem that how many limit cycles bifurcate from the period annuli
of system $X_{2.9}$ has not yet been studied.
\end{remark}
Different from the classical methods which are used to study the number of
limit cycles that bifurcate from the periodic orbits of a center,
for example Poincar\'e-Melnikov integral method, Picard-Fuchs equation method \cite{HI,ZZ}, inverse integrating
factor method \cite{GLV}, averaging method \cite{BL1} and so on, this paper takes
advantage of some symmetric properties of the first integral and
integrating factor to give the number of zeros of the Abelian integrals by direct
computation.  Compared with the method using Picard-Fuchs
equation, our method gives the better upper bound. The computational approach has similarities with the technique used in \cite{LLLZ1,LLLZ2},
but we do not use Green's theorem which makes the work
harder than computing the usual single Abelian integral directly in this paper.

Recently, Llibre {\it et al.} also study limit cycles of cubic polynomial differential systems with rational first integrals of degree 2 under polynomial perturbations of degree $3$ in \cite{LLM} using averaging method. They give six families of cubic polynomial differential systems, denote by $P_k$ for $k=1,2,\ldots,6$, where $P_3$ and $P_5$ are equivalent to ours. However, they only give the exact upper bound for $P_1,P_2,P_4$ and $P_6$, and give an example of class $P_5$ that has at most 3 limit cycles.

We investigate the sharp upper bound for the number of zeros of the Abelian integrals
with respect to the two systems for $n=3$. The result is the following one.

\begin{theorem}\label{th3}
$H_{X_{2.9}}(3)=3$ and $H_{X_{2.10}}(3)=3$.
\end{theorem}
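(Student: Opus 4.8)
The plan is to establish the lower bounds $H_{X_{2.9}}(3)\ge 3$ and $H_{X_{2.10}}(3)\ge 3$ together with the matching upper bounds in one stroke, by proving that on each period annulus the Abelian integrals produced by degree-$3$ perturbations span an \emph{extended complete Chebyshev system} (ECT-system) of exactly four functions. Observe first that the upper bounds $\le 3$ are already contained in Theorems~\ref{th1} and~\ref{th2}: substituting $n=3$ gives $H_{X_{2.9}}(3)\le 2\cdot 3-3=3$ and $H_{X_{2.10}}(3)\le[(3\cdot 3-3)/2]=3$. So it suffices to exhibit, for each system, a cubic perturbation whose Abelian integral has three simple zeros; the ECT-system property delivers this automatically, since a nontrivial combination of four ECT-generators can be made to vanish at any three prescribed interior points.

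For the reduction step I would start from $M(h)=\oint_{H=h}x^{-2}y^{-2}\bigl(f\,\mathrm{d}y-g\,\mathrm{d}x\bigr)$ with $f,g$ arbitrary of degree $3$, and expand it as a linear combination of the monomial integrals $\oint_{H=h}x^{p}y^{q}\,\mathrm{d}x$ and $\oint_{H=h}x^{p}y^{q}\,\mathrm{d}y$ over the finite index set forced by $\deg f,\deg g\le 3$ and the factor $x^{-2}y^{-2}$. The decisive simplification is that the level set $\{H=h\}$ is the conic $x^{2}+y^{2}+bx+cy+1=h\,xy$; differentiating this relation and integrating by parts along the closed curve provides algebraic identities among the monomial integrals, and I expect these to collapse the whole collection onto four independent generators $I_{0}(h),\dots,I_{3}(h)$, the count matching $2n-3=3$ (resp.\ $[(3n-3)/2]=3$). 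For $X_{2.10}$ the reversibility with respect to $y=x$ recorded in the Remark annihilates the antisymmetric monomials, which both shrinks the computation and pins down the four survivors.

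The heart of the argument, and the main obstacle, is verifying that $\{I_{0},I_{1},I_{2},I_{3}\}$ is an ECT-system on the relevant $h$-interval, i.e.\ that each Wronskian $W_{k}(h)=\det\bigl(I_{j}^{(i)}(h)\bigr)_{0\le i,j\le k-1}$ is nonzero for $k=1,2,3,4$ throughout the annulus. Since the $I_{j}$ are transcendental in $h$, I would bring the first integral to the normal form $S^{*}$ via the affine changes of variables given in the Remark and then apply the Chebyshev criterion of Grau--Ma\~nosas--Villadelprat: this reduces the nonvanishing of the Wronskians to showing that an associated family of algebraic functions $\ell_{i}(x)$ forms a Chebyshev system and that a continuation/monotonicity condition holds at the center and at the boundary of the annulus. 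The reversible case $X_{2.10}$ should make these auxiliary functions explicit and monotone, whereas for the non-reversible $X_{2.9}$ one must treat its two period annuli separately and control the generators as $h\to\pm 2$ (the saddle value) and $h\to h_{\pm}$ (the center values), which I anticipate will require the most delicate estimates.

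Once the ECT-system property is in hand, choosing coefficients $\alpha_{0},\dots,\alpha_{3}$ so that $\sum_{j}\alpha_{j}I_{j}$ has three simple zeros shows $H_{X_{2.9}}(3)\ge 3$ and $H_{X_{2.10}}(3)\ge 3$, which together with the upper bounds from Theorems~\ref{th1} and~\ref{th2} yields the claimed equalities $H_{X_{2.9}}(3)=3$ and $H_{X_{2.10}}(3)=3$.
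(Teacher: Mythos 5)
Your overall architecture coincides with the paper's: the upper bounds come from Theorems~\ref{th1} and~\ref{th2} with $n=3$, and the lower bounds come from showing that the degree-$3$ Abelian integrals span a four-dimensional space generated by an ECT-system, whose interpolation property then yields three simple zeros. Where you diverge is in how the ECT property is verified. The paper does not need the Grau--Ma\~{n}osas--Villadelprat algebraic reduction on the normal form $S^{*}$: it computes $M^{\pm}(h)$ in closed form as $a_1^{\pm}+a_2^{\pm}h+a_3^{\pm}h^2+a_4^{\pm}\sqrt{4-h^2}+a_5^{\pm}h\sqrt{4-h^2}$, with $a_2^{\pm},\dots,a_5^{\pm}$ independent (checked via a Jacobian) and $a_1^{\pm}$ eliminated by the constraint $M^{\pm}(h_{\pm})=0$, so the generators are the elementary functions $h+m$, $h^2+n$, $\sqrt{4-h^2}+p$, $h\sqrt{4-h^2}+q$ and the four Wronskians $\Omega_1,\dots,\Omega_4$ are evaluated by hand. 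Your plan is workable in principle but heavier than necessary, and it leaves the decisive steps (the collapse of the monomial integrals onto four independent generators, the independence of their coefficients, and the signs of the Wronskians) as expectations rather than verifications.

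One concrete point needs repair: your stated goal of proving the Wronskians nonzero \emph{throughout the annulus} is not achievable in general. The paper shows that $\Omega_3^{+}$ acquires a zero inside $U^{+}=(h_+,2)$ whenever $b^2+c^2>4$ (and similarly for $X_{2.10}$ when $\sqrt{2}<b<2$), so the four generators form an ECT-system only on a subinterval $(h_+,d)$ in that regime. This does not damage the theorem --- an ECT-system on any subinterval near the center value already produces a linear combination with three simple zeros there, and the upper bound is supplied independently by Theorems~\ref{th1} and~\ref{th2} --- but your argument must be phrased so that it only requires the ECT property locally, not on the whole period annulus.
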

As is introduced above, there are two families of periodic orbits of system $X_{2.9}$.
It is natural to consider the simultaneous bifurcation and distribution of
limit cycles that emerge from both period annuli (e.g., \cite{CGP,GGJ}).
The configuration of limit cycles $(u,v)$, $u\geq0, v\geq0$, is considered to be achievable
if, for $\varepsilon$ small enough, exactly $u$ (resp. $v$) limit cycles bifurcate from the periodic orbits surrounding $C_{+}$ (resp. $C_{-}$).
Though, up to first order in $\varepsilon$, three limit cycles can emerge from each period annulus of system $X_{2.9}$ under cubic polynomial perturbations, it does not means that the configuration (3,3) can be achievable (see for instance \cite{CLP,L}).  In the present paper, we give a positive answer, i.e., (3,3) is impossible.
\begin{theorem}\label{th4}
Under cubic polynomial perturbations, all configurations $(u,v)$ of limit cycles bifurcated from the two period annuli of system
$X_{2.9}$ can be realized, where $0\leq u, v\leq 3, u+v\leq5$.
\end{theorem}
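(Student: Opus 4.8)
The plan is to reduce the problem to the joint zero distribution of a single Abelian integral and then to separate the constructive part (realizability) from the obstruction (impossibility of $(3,3)$). From the proof of Theorem~\ref{th3} we have, for cubic perturbations, an explicit representation
$$M(h)=\sum_{i=0}^{3}c_i\,L_i(h),$$
where $\{L_0,L_1,L_2,L_3\}$ is an ECT-system of dimension $4$ on each of the two period-annulus intervals $(h_+,2)$ and $(-2,h_-)$, and the coefficients $c_0,\dots,c_3$ are linear combinations of the coefficients of $f$ and $g$. The first thing I would check is that the linear map sending the cubic perturbation parameters to $(c_0,c_1,c_2,c_3)$ is surjective (this is forced by Theorem~\ref{th3}, since three zeros on a single annulus already require access to the extremal element), so that $(c_0,\dots,c_3)$ may be prescribed freely in $\mathbb{R}^4$. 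After this reduction, realizing a configuration $(u,v)$ is exactly the question of choosing $(c_0:\cdots:c_3)\in\mathbb{RP}^3$ so that $M$ has exactly $u$ simple zeros on $(h_+,2)$ and exactly $v$ on $(-2,h_-)$.

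For the constructive part I would proceed in two stages. First produce the extremal configurations $(3,2)$ and $(2,3)$: using the ECT-property on a single annulus one obtains a combination with three simple zeros on $(h_+,2)$; I would then show that the remaining freedom in $(c_0,\dots,c_3)$ still allows two sign changes of the \emph{same} function on $(-2,h_-)$, by evaluating the principal parts of the $L_i$ at the two ends $-2^{+}$ and $h_-^{-}$ and checking that the relevant Wronskian-type determinants are nonzero. Once $(3,2)$ and $(2,3)$ are in hand, every configuration with $u+v\le 5$ and $u,v\le 3$ follows by a standard coalescence/continuation argument: small additional perturbations make pairs of simple zeros collide and disappear, or push a simple zero out through a boundary $h_\pm$ or $\pm2$, and by continuity of the zero set one descends through all intermediate counts while never leaving the admissible region.

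The main obstacle is the impossibility of $(3,3)$, equivalently the sharp joint bound $u+v\le 5$. The subtlety is that $\{L_i\}$ has full rank $4$ — hence permits three zeros — on each interval \emph{separately}, yet the \emph{single} function $M$ assembled from the \emph{shared} coefficients $c_i$ cannot attain three zeros on both intervals simultaneously; note this already shows the two annuli are not governed by one ECT-system on $(-2,2)$, since such a system would cap the total at $3$. I would prove the bound by a sign-alternation argument: three simple zeros on $(h_+,2)$ force the ordered coefficients $(c_0,\dots,c_3)$ into a definite alternation pattern dictated by the Chebyshev structure on $(h_+,2)$, and I would then show that this same pattern, read against the Chebyshev structure of $\{L_i\}$ on $(-2,h_-)$, permits at most two sign changes of $M$ there. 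Carrying this out rests on the explicit principal parts and the relative signs of the $L_i$ at the two centers $h_\pm$ and at the outer ends $\pm2$, and verifying that the two alternation patterns are incompatible in the extremal case is the crux of the whole theorem.
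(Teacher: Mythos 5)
Your reduction misstates the parameter structure of the joint problem, and this propagates into both halves of the argument. For system $X_{2.9}$ the paper writes the two Abelian integrals as $M^{\pm}(h)=a_{2}^{\pm}f_{0}^{\pm}(h)+a_{3}^{\pm}f_{1}^{\pm}(h)\pm a_{4}^{+}f_{2}^{\pm}(h)\pm a_{5}^{+}f_{3}^{\pm}(h)$: only the coefficients of $\sqrt{4-h^{2}}$ and $h\sqrt{4-h^{2}}$ are shared (up to sign) between the two annuli, while $(a_{2}^{+},a_{3}^{+})$ and $(a_{2}^{-},a_{3}^{-})$ are independent of each other; a $6\times 6$ Jacobian computation shows that all six of $a_{2}^{\pm},a_{3}^{\pm},a_{4}^{+},a_{5}^{+}$ can be prescribed freely. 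Your model of a single shared vector $(c_{0},\dots,c_{3})$ governing both intervals is therefore incorrect: it understates the freedom used for the constructive part (the realizations with $u,v\le 2$ rest precisely on the independence of $(a_{2}^{\pm},a_{3}^{\pm})$ together with a rescaling that matches the one shared coefficient), and the claimed surjectivity onto your four-dimensional space is not ``forced by Theorem~\ref{th3}'' for the simultaneous problem in any case.

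More seriously, your obstruction argument for $(3,3)$ has no proof behind its key step. For an ECT-system, the existence of $k$ simple zeros of a linear combination does \emph{not} force a definite sign-alternation pattern on the ordered coefficients (that is a Descartes-type statement which fails here), so the asserted incompatibility of the two patterns is not something you can ``read off'' the Chebyshev structure. The actual mechanism is different and much more rigid: since $a_{1}+a_{2}h+a_{3}h^{2}$ is annihilated by three differentiations, ${M^{\pm}}^{(3)}(h)=\mp 12(a_{4}^{+}h+4a_{5}^{+})/(4-h^{2})^{5/2}$ is the \emph{same} function up to sign on both annuli, with a unique zero $h_{0}=-4a_{5}^{+}/a_{4}^{+}$ that can lie in at most one of the disjoint intervals $U^{+},U^{-}$; Rolle's theorem together with $M^{\pm}(h_{\pm})=0$ then caps the counts at $3$ on one annulus and $2$ on the other. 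Finally, your coalescence/continuation descent from $(3,2)$ is not automatic: simple zeros vanish in pairs under small perturbations unless pushed through an endpoint, and $h_{\pm}$ are forced zeros of $M^{\pm}$, so passing a zero through them changes the multiplicity bookkeeping. The paper avoids this by constructing every configuration directly, via the ECT property (each number of zeros from $0$ to $3$ is realizable on a suitable subinterval) and, for $(3,2)$, via explicit asymptotic expansions of $M^{\pm}$ at $h_{\pm}$ in a second set of independent parameters $s_{i}^{\pm}$.
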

The paper is organized as follows: in sections 2 and 3, we study the number of zeros
of the Abelian integrals for systems $X_{2.9}$ and $X_{2.10}$, respectively. Upper bounds for $H_{X_{2.9}}(n)$ and $H_{X_{2.10}}(n)$ are obtained. Section 4 focuses on the analysis of the least upper bound for the number of zeros of the Abelian integrals associated to the two systems for $n=3$. \emph{Chebyshev criterion} is used to determine $H_{X_{2.9}}(3)$ and $H_{X_{2.10}}(3)$. Finally, we give the simultaneous bifurcation and distribution of limit cycles bifurcated from the two period annuli of system
$X_{2.9}$ under cubic polynomial perturbations in section 5.

\section{Zeros of the Abelian integral for system $X_{2.9}$}
In this section, we will study $H_{X_{2.9}}(n)$.
Let $\Gamma_{h}$ be the closed component of the algebraic
curve
\[
H(x,y,b,c)=\dfrac{1+bx+cy+x^2+y^2}{xy}=h
\]
and $\Gamma_{h}^{'}$ be the closed component
of the algebraic curve
\[
H(x,y,c,b)=\dfrac{1+cx+by+x^2+y^2}{xy}=h.
\]
The Abelian integral associated to system $X_{2.9}$ is defined as
\begin{equation}\begin{split}\label{Mh:29}
M(h)&=\displaystyle\oint_{\Gamma_{h}}x^{-2}y^{-2}(f\mathrm{d}y-g\mathrm{d}x)\\
&=\displaystyle\sum_{i+j=0}^{n}\oint_{\Gamma_{h}}(a_{ij}x^{i-2}y^{j-2}\mathrm{d}y-b_{ij}x^{i-2}y^{j-2}\mathrm{d}x), \quad  h\in(-2,h_{-})\cup(h_{+},2),
\end{split}\end{equation}
where $\Gamma_{h}$ has the positive (resp. negative) orientation if it surrounds $C_{+}$ (resp. $C_{-}$).

Denote
\begin{equation}\label{Iij:29}
I_{ij}(h)=\displaystyle\oint_{\Gamma_{h}}x^{i-2}y^{j-2}\mathrm{d}x,\quad \bar{I}_{ij}(h)=\displaystyle\oint_{\Gamma_{h}^{'}}x^{i-2}y^{j-2}\mathrm{d}x.
\end{equation}
\begin{proposition}\label{Prop:29}
$M(h)$ has the following expression:
\begin{equation}
M(h)=\sum_{i+j=0}^{n}\left(-a_{ji}\bar{I}_{ij}(h)-b_{ij}I_{ij}(h)\right), \quad  h\in(-2,h_{-})\cup(h_{+},2).
\end{equation}
\end{proposition}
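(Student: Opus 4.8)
The plan is to exploit the symmetry of the first integral under interchange of the two variables. The starting point is the algebraic identity $H(y,x,b,c)=H(x,y,c,b)$, which is immediate from the defining formula: swapping $x$ and $y$ turns $\tfrac{1+bx+cy+x^2+y^2}{xy}$ into $\tfrac{1+by+cx+y^2+x^2}{yx}=\tfrac{1+cx+by+x^2+y^2}{xy}$. Hence the reflection $\sigma:(x,y)\mapsto(y,x)$ across the diagonal $y=x$ maps the level curve $\Gamma_h=\{H(x,y,b,c)=h\}$ onto the level curve $\Gamma_h'=\{H(x,y,c,b)=h\}$. This is the structural fact that links the two families of ovals and lets me convert the $\mathrm{d}y$-integrals over $\Gamma_h$, which carry the coefficients $a_{ij}$, into $\mathrm{d}x$-integrals over $\Gamma_h'$.

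The main computation is to rewrite each summand $a_{ij}\oint_{\Gamma_h}x^{i-2}y^{j-2}\,\mathrm{d}y$ appearing in \eqref{Mh:29}. I would perform the change of variables $(x,y)\mapsto(y,x)$ induced by $\sigma$ directly in the line integral. Under this substitution $\mathrm{d}y$ becomes $\mathrm{d}x$ and the monomial $x^{i-2}y^{j-2}$ becomes $x^{j-2}y^{i-2}$, the clean transposition of exponents reflecting the invariance of the integrating factor $x^{-2}y^{-2}$ under the swap, while $\Gamma_h$ is carried onto $\Gamma_h'$; thus $\oint_{\Gamma_h}x^{i-2}y^{j-2}\,\mathrm{d}y=\oint_{\sigma(\Gamma_h)}x^{j-2}y^{i-2}\,\mathrm{d}x$. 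Comparing with the definition \eqref{Iij:29} of $\bar{I}_{ji}(h)$, the right-hand side equals $\pm\bar{I}_{ji}(h)$, the sign being governed by the orientation of $\sigma(\Gamma_h)$ relative to the convention fixed for $\Gamma_h'$.

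Granting that $\oint_{\Gamma_h}x^{i-2}y^{j-2}\,\mathrm{d}y=-\bar{I}_{ji}(h)$, the proposition is pure bookkeeping. Substituting into \eqref{Mh:29} gives $M(h)=-\sum_{i+j=0}^{n}a_{ij}\bar{I}_{ji}(h)-\sum_{i+j=0}^{n}b_{ij}I_{ij}(h)$. Relabelling the summation indices $i\leftrightarrow j$ in the first sum, which is legitimate because the index set $0\le i+j\le n$ is symmetric under the swap, replaces $a_{ij}\bar{I}_{ji}(h)$ by $a_{ji}\bar{I}_{ij}(h)$ and yields exactly the stated expression.

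I expect the one delicate point to be the sign in $\oint_{\sigma(\Gamma_h)}=\pm\oint_{\Gamma_h'}$. The reflection $\sigma$ has Jacobian determinant $-1$ and so reverses orientation, sending a counterclockwise oval to a clockwise one; heuristically this forces the minus sign. To make it rigorous I would fix the orientation of $\Gamma_h'$ consistently with that of $\Gamma_h$ (positive when enclosing $C_{+}$, negative when enclosing $C_{-}$), track how $\sigma$ permutes the two centers and hence the two subintervals of $(-2,h_-)\cup(h_+,2)$, and verify that the orientation reversal produces the uniform sign $-\bar{I}_{ji}(h)$ on both period annuli. This orientation accounting, rather than any analytic estimate, is the only real content beyond the symmetry identity.
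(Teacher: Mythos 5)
Your proof is correct and is essentially the paper's own argument: the identity $H(y,x,b,c)=H(x,y,c,b)$, the swap $x\leftrightarrow y$ converting $\oint_{\Gamma_h}x^{i-2}y^{j-2}\,\mathrm{d}y$ into $-\bar{I}_{ji}(h)$, and the relabelling of indices are exactly what the paper does, merely phrased there as an explicit branch parametrization (writing $x=\tfrac{hy-b}{2}\pm\sqrt{\Psi(y)}$ and renaming the dummy variable) rather than as the reflection $\sigma$. The orientation issue you flag is resolved in the paper precisely as you anticipate: the renamed integral is the negative of the circuit integral over $\Gamma_h'$ taken with the same orientation as $\Gamma_h$, giving the uniform minus sign on both annuli.
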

\begin{proof}
Suppose the level curve $\Gamma_{h}$ is a periodic orbit that
surrounds the center $C_{+}$ of system $X_{2.9}$, then
\begin{equation*}\begin{split}
&\displaystyle\oint_{\Gamma_{h}}x^{i-2}y^{j-2}\mathrm{d}y\\
=&\displaystyle\int_{y_{1}}^{y_{2}}\left(\dfrac{hy-b}{2}+\sqrt{\Psi(y)
}\right)^{i-2}y^{j-2}\mathrm{d}y+\displaystyle\int_{y_{2}}^{y_{1}}\left(\dfrac{hy-b}{2}-\sqrt{\Psi(y)
}\right)^{i-2}y^{j-2}\mathrm{d}y,
\end{split}\end{equation*}
where $\Psi(y)=(h^{2}/4-1)y^{2}-(bh/2+c)y+b^{2}/4-1$ and $y_{1},
y_{2}$ denote the two roots of the equation $\Psi(y)=0$ with $y_{1}<y_{2}$.
Set $y=x$ in the right-hand-side of the last equality, then
\begin{equation}\begin{split}\label{1}
&\displaystyle\oint_{\Gamma_{h}}x^{i-2}y^{j-2}\mathrm{d}y\\[-3pt]
=&\displaystyle\int_{y_{1}}^{y_{2}}\left(\dfrac{hx-b}{2}+\sqrt{\Psi(x)
}\right)^{i-2}x^{j-2}\mathrm{d}x+\displaystyle\int_{y_{2}}^{y_{1}}\left(\dfrac{hx-b}{2}-\sqrt{\Psi(x)
}\right)^{i-2}x^{j-2}\mathrm{d}x\\[-3pt]
=&-\displaystyle\oint_{\Gamma_{h}^{'}}y^{i-2}x^{j-2}\mathrm{d}x=-\bar{I}_{ji}(h),
\end{split}\end{equation}
where $\Gamma_{h}^{'}$ has the same orientation as $\Gamma_{h}$. The second
equality holds since $y_{1}, y_{2}$ are also the roots of the equation $\Psi(x)=0$.

The proposition follows from \eqref{Mh:29}, \eqref{Iij:29} and \eqref{1}.
\end{proof}
By Proposition 6, owing to the fact that the form and algorithm of the two parts
of $M(h)$ are similar, we only need to calculate
\begin{equation}\label{M29}
\sum_{i+j=0}^{n}b_{ij}I_{ij}(h)=M_{1}(h)+M_{2}(h)+M_{3}(h), \quad h\in(-2,h_{-})\cup(h_{+},2),
\end{equation}
where
\begin{equation*}\begin{split}
M_{1}(h)&=\displaystyle\sum_{j\geq2,\atop i+j\leq n}b_{ij}I_{ij}(h)=\sum_{j\geq2,\atop i+j\leq n}b_{ij}\displaystyle\oint_{\Gamma_{h}}x^{i-2}y^{j-2}\mathrm{d}x,\\[-3pt]
M_{2}(h)&=\displaystyle\sum^{n-1}_{i=0}b_{i1}I_{i1}(h)=\sum_{i=0}^{n-1}b_{i1}\displaystyle\oint_{\Gamma_{h}}x^{i-2}y^{-1}\mathrm{d}x,\\[-3pt]
M_{3}(h)&=\displaystyle\sum^{n}_{i=0}b_{i0}I_{i0}(h)=\sum_{i=0}^{n}b_{i0}\displaystyle\oint_{\Gamma_{h}}x^{i-2}y^{-2}\mathrm{d}x.
\end{split}\end{equation*}

Let $\Delta=(h^{2}/4-1)x^{2}-(ch/2+b)x+c^{2}/4-1$ and $x_{1}, x_{2}$
are the two roots of the equation $\Delta=0$ with $x_{1}<
x_{2}$. Then
\begin{equation}\label{Viete1}
x_{1}+x_{2}=-\dfrac{2(2b+ch)}{4-h^{2}},\ \ \
x_{1}x_{2}=\dfrac{4-c^{2}}{4-h^{2}}.
\end{equation}
We have that
\begin{equation}\label{sign}
I_{ij}(h)=\displaystyle\pm\int_{x_{1}}^{x_{2}}x^{i-2}\left[\left(\dfrac{hx-c}{2}-\sqrt{\Delta
}\right)^{j-2}-\left(\dfrac{hx-c}{2}+\sqrt{\Delta }\right)^{j-2}\right]\mathrm{d}x.
\end{equation}
\begin{remark}
The appearance of ``$\pm$'' in \eqref{sign} comes from the fact that $\Gamma_{h}$  has the positive orientation when $h\in(h_+,2)$ and the negative orientation when $h\in(-2,h_-)$. However, it is easy to verify that the sign ``$\pm$'' does not affect the maximum number of zeros of the Abelian integral defined on each interval. Thus we will drop it in the computation of $M_{i}(h),i=1,2,3$ for $h\in(-2,h_{-})\cup(h_{+},2)$. The results obtained differ from the original functions at most by a negative sign.
\end{remark}
First, we get that
\begin{equation}\begin{split}\label{M1}
M_{1}(h)&=\displaystyle\sum_{j\geq2, \atop i+j\leq n} b_{ij}\int_{x_{1}}^{x_{2}}x^{i-2}\left[\left(\dfrac{hx-c}{2}-\sqrt{\Delta
}\right)^{j-2}-\left(\dfrac{hx-c}{2}+\sqrt{\Delta }\right)^{j-2}\right]\mathrm{d}x\\[-10pt]
&=\displaystyle\sum_{j\geq3, \atop i+j\leq n} b_{ij}\int_{x_{1}}^{x_{2}}x^{i-2}\left(\sum_{k=0}^{j-3}\tilde{m}_{j,k}(h)x^{k}\sqrt{\Delta}\right)\mathrm{d}x\\[-4pt]
&=\displaystyle\sum_{k=0}^{n-3}m_{k}(h)J_{k}(h),
\end{split}\end{equation}
where $\tilde{m}_{j,k}(h),m_{k}(h)$ are polynomials of $h$ with
$\deg \tilde{m}_{j,k}(h)\leq{k},\deg m_{k}(h)\leq{k}$, and
\begin{equation*}
J_{k}(h)=\displaystyle\int_{x_{1}}^{x_{2}}x^{k-2}\sqrt{\Delta}\mathrm{d}x
=\sqrt{1-h^{2}/4}\int_{x_{1}}^{x_{2}}x^{k-2}\sqrt{(x_{2}-x)(x-x_{1})}\mathrm{d}x.
\end{equation*}
In order to study $J_{k}(h)$, we use two different transformations.
If $\sqrt{(x_{2}-x)(x-x_{1})}=t(x-x_{1})$, then
\begin{equation*}
J_{k}(h)=\displaystyle\sqrt{4-h^{2}}\
(x_{2}-x_{1})^{2}\int_{0}^{\infty}\dfrac{t^{2}(x_{2}+t^{2}x_{1})^{k-2}}{(1+t^{2})^{k+1}}\mathrm{d}t,
\end{equation*}
and if $\sqrt{(x_{2}-x)(x-x_{1})}=t(x_2-x)$, then
\begin{equation*}
J_{k}(h)=\displaystyle\sqrt{4-h^{2}}\ (x_{2}-x_{1})^{2}\int_{0}^{\infty}\dfrac{t^{2}(x_{1}+t^{2}x_{2})^{k-2}}{(1+t^{2})^{k+1}}\mathrm{d}t.
\end{equation*}
Thus, by \eqref{Viete1}, for $k\geq2$,
\begin{equation}\begin{split}\label{Jk}
J_{k}(h)&=\displaystyle\sqrt{4-h^{2}}\ (x_{2}-x_{1})^{2}\int_{0}^{\infty}\dfrac{t^{2}[(x_{2}+t^{2}x_{1})^{k-2}+(x_{1}+t^{2}x_{2})^{k-2}]}{2(1+t^{2})^{k+1}}\mathrm{d}t\\
&=\displaystyle\sqrt{4-h^{2}}\
(x_{2}-x_{1})^{2}\sum_{i+2j=k-2}d_{ij}(x_{1}+x_{2})^{i}(x_{1}x_{2})^{j}\\
&=\displaystyle\dfrac{16(h^{2}+bch+b^{2}+c^{2}-4)}{(4-h^{2})^{3/2}}
\sum_{i+2j=k-2}d_{ij}\left[-\dfrac{2(2b+ch)}{4-h^{2}}\right]^{i}\left(\dfrac{4-c^{2}}{4-h^{2}}\right)^{j}\\
&=(4-h^{2})^{-(k-1/2)}P_{k}(h),
\end{split}\end{equation}
where $d_{ij}, i+2j=k-2$ are constants and $P_{k}(h)$ denotes a polynomial of degree $k$. By direct computations, we get that
\begin{equation}\begin{array}{ll}\label{J1J0}
J_{1}(h)=\displaystyle\int_{x_{1}}^{x_{2}}x^{-1}\sqrt{\Delta}\mathrm{d}x
=\left\{\begin{array}{ll}
-\displaystyle\dfrac{2b+ch}{2\sqrt{4-h^{2}}}\pi-\dfrac{\sqrt{4-c^{2}}}{2}\pi, & \mbox{$h\in(-2,h_{-})$,}\\[2ex]
-\displaystyle\dfrac{2b+ch}{2\sqrt{4-h^{2}}}\pi+\dfrac{\sqrt{4-c^{2}}}{2}\pi, & \mbox{$h\in(h_{+},2)$,}\\[2ex]
\end{array} \right.\\[2ex]
J_{0}(h)=\displaystyle\int_{x_{1}}^{x_{2}}x^{-2}\sqrt{\Delta}\mathrm{d}x
=\left\{\begin{array}{ll}
-\displaystyle\dfrac{2b+ch}{2\sqrt{4-c^{2}}}\pi-\dfrac{\sqrt{4-h^{2}}}{2}\pi, & \mbox{$h\in(-2,h_{-})$,}\\[2ex]
\displaystyle\dfrac{2b+ch}{2\sqrt{4-c^{2}}}\pi-\dfrac{\sqrt{4-h^{2}}}{2}\pi, & \mbox{$h\in(h_{+},2)$.}\\[2ex]
\end{array} \right.
\end{array}\end{equation}
It follows from \eqref{M1}, \eqref{Jk} and \eqref{J1J0} that
\begin{equation}\begin{split}\label{M1_9}
M_{1}(h)&=m_{0}(h)J_{0}(h)+m_{1}(h)J_{1}(h)+\displaystyle\sum_{k=2}^{n-3}m_{k}(h)J_{k}(h)\\[-6pt]
&=\hat{P}_{1}(h)+\displaystyle\dfrac{\hat{P}_{2n-6}(h)}{(4-h^{2})^{n-7/2}},
\end{split}\end{equation}
where $\hat{P}_{1}(h)$ and $\hat{P}_{2n-6}(h)$ are
polynomials with
$\deg \hat{P}_{1}(h)\leq{1}$ and $\deg\hat{P}_{2n-6}(h)\leq2n-6$, respectively.

Next we calculate $M_{2}(h)$ and $M_{3}(h)$.
\begin{equation}\begin{split}\label{M2}
M_{2}(h)&=\displaystyle\sum^{n-1}_{i=0} b_{i1}\int_{x_{1}}^{x_{2}}x^{i-2}\left[\dfrac{1}{(hx-c)/2-\sqrt{\Delta
}}-\dfrac{1}{(hx-c)/2+\sqrt{\Delta}}\right]\mathrm{d}x\\[-3pt]
&=\displaystyle \sum^{n-1}_{i=0} b_{i1}\int_{x_{1}}^{x_{2}}\dfrac{2x^{i-2}\sqrt{\Delta
}}{x^{2}+bx+1}\mathrm{d}x\\[-3pt]
&=\displaystyle \sum^{3}_{i=0}\tilde{b}_{i1}S_{i}(h)+\sum^{n-1}_{i=4}\tilde{b}_{i1}\int_{x_{1}}^{x_{2}}x^{i-4}\sqrt{\Delta
}\mathrm{d}x\\[-3pt]
&=\displaystyle\sum^{3}_{i=0}\tilde{b}_{i1}S_{i}(h)+ \sum^{n-1}_{i=4}\tilde{b}_{i1}J_{i-2}(h),\\[-3pt]
\end{split}\end{equation}
where
$\tilde{b}_{i1},i=0,1,...,n-1,$ are linear combinations of $b_{i1},i=0,1,...,n-1,$ and

\begin{equation*}
S_{i}(h)=\displaystyle\int_{x_{1}}^{x_{2}}\dfrac{2x^{i-2}\sqrt{\Delta
}}{x^{2}+bx+1}\mathrm{d}x, \quad i=0,1,2,3.
\end{equation*}
If $h\in(-2,h_{-})$, then
\begin{equation}\begin{array}{ll}\label{Si-}
S_{0}(h)=\dfrac{(b^{2}-2)c+bh}{\sqrt{4-b^{2}}}\pi-\dfrac{(c^{2}-2)b+ch}{\sqrt{4-c^{2}}}\pi,\\
S_{1}(h)=-\dfrac{bc+2h}{\sqrt{4-b^{2}}}\pi-\sqrt{4-c^{2}}\pi,\\
S_{2}(h)=\dfrac{2c+bh}{\sqrt{4-b^{2}}}\pi-\sqrt{4-h^{2}}\pi,\\
S_{3}(h)=-\dfrac{bc+(b^{2}-2)h}{\sqrt{4-b^{2}}}\pi-\dfrac{-2b+ch+bh^{2}}{\sqrt{4-h^{2}}}\pi,
\end{array}\end{equation}
and if $h\in(h_{+},2)$, then
\begin{equation}\begin{array}{ll}\label{Si+}
S_{0}(h)=\dfrac{(b^{2}-2)c+bh}{\sqrt{4-b^{2}}}\pi+\dfrac{(c^{2}-2)b+ch}{\sqrt{4-c^{2}}}\pi,\\
S_{1}(h)=-\dfrac{bc+2h}{\sqrt{4-b^{2}}}\pi+\sqrt{4-c^{2}}\pi,
\end{array}\end{equation}
$S_{2}(h)$ and $S_{3}(h)$ are the same as \eqref{Si-}.
By \eqref{Jk}, \eqref{M2}, \eqref{Si-} and \eqref{Si+}, we obtain
\begin{equation}\begin{split}\label{M2_9}
M_{2}(h)
=\tilde{P}_{1}(h)+\displaystyle\dfrac{\tilde{P}_{2n-6}(h)}{(4-h^{2})^{n-7/2}},\\
\end{split}\end{equation}
where $\tilde{P}_{1}(h)$ and $\tilde{P}_{2n-6}(h)$ are
polynomials with
$\deg \tilde{P}_{1}(h)\leq{1}$ and $\deg\tilde{P}_{2n-6}(h)\leq2n-6$, respectively.

The technique used in the computation of $M_{3}(h)$ is shown as follows:
\begin{equation}\begin{split}\label{M3}
M_{3}(h)&=\displaystyle\sum^{n}_{i=0}b_{i0}\int_{x_{1}}^{x_{2}}x^{i-2}\left[\dfrac{1}{((hx-c)/2-\sqrt{\Delta
})^2}-\dfrac{1}{((hx-c)/2+\sqrt{\Delta})^2}\right]\mathrm{d}x\\[-3pt]
&=\displaystyle\sum^{n}_{i=0}b_{i0}\int_{x_{1}}^{x_{2}}\dfrac{2x^{i-2}(hx-c)\sqrt{\Delta
}}{(x^{2}+bx+1)^2}\mathrm{d}x\\[-3pt]
&=\displaystyle\sum^{3}_{i=0}b_{i0}R_{i}(h)+\sum^{n}_{i=4}b_{i0}\int_{x_{1}}^{x_{2}}\dfrac{2x^{i-2}(hx-c)\sqrt{\Delta
}}{(x^{2}+bx+1)^2}\mathrm{d}x\\[-3pt]
&=\mu(h)S_{2}(h)+\nu(h)S_{3}(h)+\displaystyle\sum^{3}_{i=0}\bar{b}_{i0}R_{i}(h)+\sum^{n}_{i=5}\bar{b}_{i0}\omega_{i}(h)J_{i-3}(h),
\end{split}\end{equation}
where
$\bar{b}_{i0}, i=0,1,2,3,5,...,n,$ are
linear combinations of $b_{i0}, i=0,1,...,n$. $\mu(h)$, $\nu(h)$ and
$\omega_{i}(h)$ are polynomials of degree at most $1$, and
\begin{equation*}
R_{i}(h)=\displaystyle\int_{x_{1}}^{x_{2}}\dfrac{2x^{i-2}(hx-c)\sqrt{\Delta
}}{(x^{2}+bx+1)^2}\mathrm{d}x, \quad i=0,1,2,3.
\end{equation*}
Further calculations show that
when $h\in(-2,h_{-})$,
\begin{equation}\begin{split}\label{Ri-}
R_{0}(h)=&\dfrac{2(-8+6c^2-6b^2(-1+c^2)+b^4(-1+c^2))\pi}{(4-b^2)^{3/2}}+\dfrac{2bc(-3+c^2)\pi}{\sqrt{4-c^2}}\\
&+h\left(\dfrac{2b(-6+b^2)c\pi}{(4-b^2)^{3/2}}+\dfrac{2(-2+c^2)\pi}{\sqrt{4-c^2}}\right)-\dfrac{4h^2\pi}{(4-b^2)^{3/2}},\\
R_{1}(h)=&\dfrac{2bc^2\pi}{(4-b^2)^{3/2}}+\dfrac{b(c^2-2)\pi}{\sqrt{4-b^2}}+c\sqrt{4-c^2}\pi+\dfrac{8ch\pi}{(4-b^2)^{3/2}}+\dfrac{2bh^2\pi}{(4-b^2)^{3/2}},\\
R_{2}(h)=&\dfrac{-4(-4+b^2+c^2+bch+h^2)\pi}{(4-b^2)^{3/2}},\\
R_{3}(h)=&\dfrac{2b(-4+b^2+c^2)\pi}{(4-b^2)^{3/2}}+\dfrac{8ch\pi}{(4-b^2)^{3/2}}
-\dfrac{b(-6+b^2)h^2\pi}{(4-b^2)^{3/2}}-h\sqrt{4-h^2}\pi,
\end{split}\end{equation}
and when $h\in(h_{+},2)$,
\begin{equation}\begin{split}\label{Ri+}
R_{0}(h)=&\dfrac{2(-8+6c^2-6b^2(-1+c^2)+b^4(-1+c^2))\pi}{(4-b^2)^{3/2}}-\dfrac{2bc(-3+c^2)\pi}{\sqrt{4-c^2}}\\
&+h\left(\dfrac{2b(-6+b^2)c\pi}{(4-b^2)^{3/2}}-\dfrac{2(-2+c^2)\pi}{\sqrt{4-c^2}}\right)-\dfrac{4h^2\pi}{(4-b^2)^{3/2}},\\
R_{1}(h)=&\dfrac{2bc^2\pi}{(4-b^2)^{3/2}}+\dfrac{b(c^2-2)\pi}{\sqrt{4-b^2}}-c\sqrt{4-c^2}\pi+\dfrac{8ch\pi}{(4-b^2)^{3/2}}+\dfrac{2bh^2\pi}{(4-b^2)^{3/2}},
\end{split}\end{equation}
$R_{2}(h)$ and $R_{3}(h)$ are the same as \eqref{Ri-}.
Hence by \eqref{Jk}, \eqref{Si-}, \eqref{Si+}, \eqref{M3}, \eqref{Ri-} and \eqref{Ri+}, we get
\begin{equation}\begin{split}\label{M3_9}
M_{3}(h)=\bar{P}_{2}(h)+\displaystyle\dfrac{{\bar{P}_{2n-5}(h)}}{(4-h^{2})^{n-7/2}},
\end{split}\end{equation}
where $\bar{P}_{2}(h)$ and $\bar{P}_{2n-5}(h)$ stand for polynomials of $h$ with
$\deg \bar{P}_{2}(h)\leq{2}$ and $\deg\bar{P}_{2n-5}(h)\leq2n-5$, respectively.

\noindent{\bfseries{Proof of Theorem 1.}} From \eqref{M29}, \eqref{M1_9}, \eqref{M2_9},
\eqref{M3_9} and Proposition 6, together with the similar form and algorithm of the two parts of
$M(h)$, we get the Abelian integral
\begin{equation}M(h)=\label{M}\left\{\begin{array}{ll}
Q_{2}(h)+\displaystyle\dfrac{{Q_{2n-5}(h)}}{(4-h^{2})^{n-7/2}} & \mbox{if $n\geq3$,}\\[2ex]
\widetilde{Q}_{2}(h) & \mbox{if $n\leq2$,}\\[2ex]
\end{array} \right.
\end{equation}
where $Q_{2}(h)$, $\widetilde{Q}_{2}(h)$ and $Q_{2n-5}(h)$ are polynomials with $\deg Q_{2}(h)\leq{2}$, $\deg \widetilde{Q}_{2}(h)\leq{2}$
and $\deg Q_{2n-5}(h)\leq2n-5$, respectively. We remark that $M(h)$ has the same form as \eqref{M} on $(-2,h_-)$ and $(h_+,2)$, respectively. But on these two intervals, they are different functions (it is clear to see from \eqref{J1J0}, \eqref{Si-}, \eqref{Si+} and so on).

Obviously, $M(h)$ is  a polynomial of degree at most 2 if $n\leq2$ and it
has at most 2 zeros. Moreover, it vanishes at the value of the
center. Thus it has at most one zero in each open interval.

$M(h)$ does not have a rational form if $n\geq3$. But we can get
the estimation of the number of zeros of $M(h)$ by taking derivatives three
times. Therefore
\begin{equation*}
\#\{h|M^{(3)}(h)=0\}\leq 2n-5,
\end{equation*}
where $\#$ denotes the number of elements of a finite set.
On the other hand, note that $M(h_{\pm})=0$.
Hence
\begin{equation*}\begin{split}
&\mbox{max}\{\#\{h\in (-2,h_-)|M(h)=0\},\#\{h\in (h_+,2)|M(h)=0\}\}\\
\leq & 2n-5+3-1=2n-3,
\end{split}\end{equation*}
i.e.,
\begin{equation*}
H_{X_{2.9}}(n)\leq 2n-3.
\end{equation*}

\section{Zeros of the Abelian integral for system $X_{2.10}$}
This section is devoted to investigate the polynomial perturbations of system $X_{2.10}$, i.e., system (1.3) with $b=c$.

Firstly, we have the following result by the same argument as the proof of Proposition 6.
\begin{proposition}
The Abelian integral associated to the system
$X_{2.10}$ is
\begin{equation}\begin{split}
M(h)&=\displaystyle\oint_{\Gamma_{h}}x^{-2}y^{-2}(f\mathrm{d}y-g\mathrm{d}x)\\[-3pt]
&=\displaystyle\sum_{i+j=0}^{n}\oint_{\Gamma_{h}}(a_{ij}x^{i-2}y^{j-2}\mathrm{d}y-b_{ij}x^{i-2}y^{j-2}\mathrm{d}x)\\[-3pt]
&=\displaystyle\sum_{i+j=0}^{n}(-a_{ji}-b_{ij})I_{ij}(h), \quad h\in(2-b^2,2),
\end{split}\end{equation}
where $\Gamma_{h}$ is the closed component of the algebraic curve $1+bx+by+x^2+y^2=hxy$ with the positive orientation, and
\[
I_{ij}(h)=\displaystyle\oint_{\Gamma_{h}}x^{i-2}y^{j-2}\mathrm{d}x.
\].
\end{proposition}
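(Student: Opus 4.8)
The plan is to mirror the proof of Proposition~\ref{Prop:29}, exploiting the fact that the defining hypothesis $b=c$ makes the first integral symmetric in $x$ and $y$. Indeed, with $b=c$ one has $H(x,y,b,c)=(1+bx+by+x^2+y^2)/(xy)$, which is invariant under the involution $(x,y)\mapsto(y,x)$. Consequently the auxiliary curve $\Gamma_h'$ introduced for $X_{2.9}$ coincides with $\Gamma_h$ itself, so that $\bar I_{ij}(h)=I_{ij}(h)$ for every pair $(i,j)$. This single collapse is what distinguishes the present situation from that of Proposition~\ref{Prop:29}, and it is why only the one family $I_{ij}(h)$ appears in the final formula.

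First I would reduce $\oint_{\Gamma_h}x^{i-2}y^{j-2}\,\mathrm{d}y$ to $-I_{ji}(h)$. Solving $1+bx+by+x^2+y^2=hxy$ for $x$ as a two-valued function of $y$ splits the loop integral into integrals over the two branches; relabelling the dummy variable $y$ as $x$ and using that $\Gamma_h$ is now self-symmetric reassembles them into a single loop integral, yielding $\oint_{\Gamma_h}x^{i-2}y^{j-2}\,\mathrm{d}y=-I_{ji}(h)$. This is verbatim the computation \eqref{1}, with the curve $\Gamma_h'$ replaced by $\Gamma_h$ and $\bar I_{ji}$ replaced by $I_{ji}$.

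Substituting this identity together with $\oint_{\Gamma_h}x^{i-2}y^{j-2}\,\mathrm{d}x=I_{ij}(h)$ into the definition of $M(h)$ gives
\begin{equation*}
M(h)=\sum_{i+j=0}^{n}\bigl(-a_{ij}I_{ji}(h)-b_{ij}I_{ij}(h)\bigr).
\end{equation*}
To finish, I would re-index the first sum by interchanging the labels $i$ and $j$; since the summation ranges over all pairs with $0\le i+j\le n$, this interchange is a bijection of the index set and replaces $a_{ij}I_{ji}$ by $a_{ji}I_{ij}$. Combining the two sums then yields $M(h)=\sum_{i+j=0}^{n}(-a_{ji}-b_{ij})I_{ij}(h)$, as claimed.

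I expect the only delicate points to be the bookkeeping of orientation and integration limits when passing from the two branch integrals back to a single closed loop, and the verification that the relabelling $(i,j)\mapsto(j,i)$ respects the range of summation. Because $b=c$ fuses the pair of distinct curves $\{\Gamma_h,\Gamma_h'\}$ used for $X_{2.9}$ into a single symmetric curve, no analytic input beyond Proposition~\ref{Prop:29} is needed; the argument is a \emph{specialization} of that proof rather than a genuinely new computation.
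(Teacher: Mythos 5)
Your proposal is correct and matches the paper, which proves this proposition simply by invoking ``the same argument as the proof of Proposition \ref{Prop:29}'': with $b=c$ the first integral is symmetric in $x$ and $y$, the auxiliary curve $\Gamma_h'$ coincides with $\Gamma_h$, so $\bar I_{ij}=I_{ij}$ and the identity $\oint_{\Gamma_h}x^{i-2}y^{j-2}\,\mathrm{d}y=-I_{ji}(h)$ together with the re-indexing $(i,j)\mapsto(j,i)$ gives the stated formula. Your observation that this is a specialization of Proposition \ref{Prop:29} rather than a new computation is exactly the intended argument.
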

Because the algorithm of  $M(h)$ is the same as that in Section 2, we only give the main results.
\begin{equation}
M(h)=\sum_{i+j=0}^{n}c_{ij}I_{ij}(h)=M_{1}(h)+M_{2}(h)+M_{3}(h), \quad h\in(2-b^2,2),
\end{equation}
where $c_{ij}=-a_{ji}-b_{ij}$, and
\begin{equation*}\begin{split}
M_{1}(h)&=\displaystyle\sum_{j\geq2,\atop i+j\leq n}c_{ij}I_{ij}(h)=\sum_{j\geq2,\atop i+j\leq n}c_{ij}\displaystyle\oint_{\Gamma_{h}}x^{i-2}y^{j-2}\mathrm{d}x,\\[-3pt]
M_{2}(h)&=\displaystyle\sum^{n-1}_{i=0}c_{i1}I_{i1}(h)=\sum_{i=0}^{n-1}c_{i1}\displaystyle\oint_{\Gamma_{h}}x^{i-2}y^{-1}\mathrm{d}x,\\[-3pt]
M_{3}(h)&=\displaystyle\sum^{n}_{i=0}c_{i0}I_{i0}(h)=\sum_{i=0}^{n}c_{i0}\displaystyle\oint_{\Gamma_{h}}x^{i-2}y^{-2}\mathrm{d}x.
\end{split}\end{equation*}

Denote $\Delta=(h^{2}/4-1)x^{2}-(bh/2+b)x+b^{2}/4-1$ and let $x_{1}, x_{2}$
be the two roots of the equation $\Delta=0$ with $x_{1}<
x_{2}$. Then
\begin{equation}
x_{1}+x_{2}=-\dfrac{2b}{2-h},\ \ \
x_{1}x_{2}=\dfrac{4-b^{2}}{4-h^{2}}.
\end{equation}
Consequently,
\begin{equation}
M_{1}(h)=\displaystyle\sum_{k=0}^{n-3}m_{k}(h)J_{k}(h),
\end{equation}
where $m_{k}(h)$ is a polynomial of $h$ with
$\deg m_{k}(h)\leq{k}$ and
\begin{equation*}
J_{k}(h)=\displaystyle\int_{x_{1}}^{x_{2}}x^{k-2}\sqrt{\Delta}\mathrm{d}x.
\end{equation*}
It is easy to verify that if $k\geq2$,
\begin{equation}\begin{split}
J_{k}(h)&=\displaystyle\sqrt{4-h^{2}}\
(x_{2}-x_{1})^{2}\sum_{i+2j=k-2}\bar{d}_{ij}(x_{1}+x_{2})^{i}(x_{1}x_{2})^{j}\\
&=\displaystyle\dfrac{16(h-(2-b^2))}{(2-h)\sqrt{4-h^{2}}}
\sum_{i+2j=k-2}\bar{d}_{ij}\left(-\dfrac{2b}{2-h}\right)^{i}\left(\dfrac{4-b^{2}}{4-h^{2}}\right)^{j}\\
&=\displaystyle\dfrac{16(h-(2-b^2))}{\sqrt{4-h^{2}}(2-h)^{k-1}(2+h)^{[(k-2)/2]}}P_{[(k-2)/2]}(h),
\end{split}\end{equation}
where $\bar{d}_{ij}, i+2j=k-2$ are constants, $[(k-2)/2]$ denotes the integer part of $(k-2)/2$ and $P_{[(k-2)/2]}(h)$ is a polynomial of degree $[(k-2)/2]$.
A direct computation leads to
\begin{equation}\begin{array}{ll}
J_{1}(h)=-\displaystyle\dfrac{b(2+h)}{2\sqrt{4-h^{2}}}\pi+\dfrac{\sqrt{4-b^{2}}}{2}\pi,\\[2ex]
J_{0}(h)=\displaystyle\dfrac{b(2+h)}{2\sqrt{4-b^{2}}}\pi-\dfrac{\sqrt{4-h^{2}}}{2}\pi.
\end{array}\end{equation}
It follows from (3.4), (3.5) and (3.6) that
\begin{equation}
M_{1}(h)=\hat{P}_{1}(h)+\displaystyle\dfrac{\hat{P}_{[(3n-9)/2]}(h)}{\sqrt{4-h^{2}}(2-h)^{n-4}(2+h)^{[(n-5)/2]}},
\end{equation}
where $\hat{P}_{1}(h)$ and $\hat{P}_{[(3n-9)/2]}(h)$ are
polynomials with $\deg \hat{P}_{1}\leq{1}$ and $\deg\hat{P}_{[(3n-9)/2]}\leq[(3n-9)/2]$, respectively.

In addition,
\begin{equation}\begin{split}
M_{2}(h)&=\displaystyle\sum^{n-1}_{i=0}c_{i1}\int_{x_{1}}^{x_{2}}x^{i-2}\left(\dfrac{1}{(hx-b)/2-\sqrt{\Delta
}}-\dfrac{1}{(hx-b)/2+\sqrt{\Delta}}\right)\mathrm{d}x\\
&=\displaystyle\sum^{3}_{i=0}\tilde{c}_{i1}S_{i}(h)+\sum^{n-1}_{i=4}\tilde{c}_{i1}J_{i-2}(h),
\end{split}\end{equation}
where
$\tilde{c}_{i1}$, $i=0,1,...,n-1$ are
linear combinations of $c_{i1}$, $i=0,1,...,n-1$, and
\begin{equation*}
S_{i}(h)=\displaystyle\int_{x_{1}}^{x_{2}}\dfrac{2x^{i-2}\sqrt{\Delta
}}{x^{2}+bx+1}\mathrm{d}x, \quad i=0,1,2,3.
\end{equation*}
By direct calculation we obtain that
\begin{equation}\begin{array}{ll}
S_{0}(h)=-bS_{1}(h)=\dfrac{2b(h-(2-b^2))}{\sqrt{4-b^{2}}}\pi,\\
S_{2}(h)=\dfrac{b(2+h)}{\sqrt{4-b^{2}}}\pi-\sqrt{4-h^{2}}\pi,\\
S_{3}(h)=-\dfrac{b^2+(b^{2}-2)h}{\sqrt{4-b^{2}}}\pi-\dfrac{b(-2+h+h^{2})}{\sqrt{4-h^{2}}}\pi.
\end{array}\end{equation}
Therefore, we get from (3.5), (3.8) and (3.9) that
\begin{equation}\begin{array}{ll}
M_{2}(h)=\tilde{P}_{1}(h)+\displaystyle\dfrac{\tilde{P}_{[(3n-9)/2]}(h)}{\sqrt{4-h^{2}}(2-h)^{n-4}(2+h)^{[(n-5)/2]}},
\end{array}\end{equation}
where $\tilde{P}_{1}(h)$ and $\tilde{P}_{[(3n-9)/2]}(h)$ are
polynomials with $\deg \tilde{P}_{1}\leq{1}$ and $\deg\tilde{P}_{[(3n-9)/2]}\leq[(3n-9)/2]$, respectively.

Finally, we have that
\begin{equation}\begin{split}
M_{3}(h)&=\displaystyle\sum^{n}_{i=0}c_{i0}\int_{x_{1}}^{x_{2}}x^{i-2}\left(\dfrac{1}{((hx-b)/2-\sqrt{\Delta
})^2}-\dfrac{1}{((hx-b)/2+\sqrt{\Delta})^2}\right)\mathrm{d}x\\[-3pt]
&=\displaystyle\sum^{n}_{i=0}c_{i0}\int_{x_{1}}^{x_{2}}\dfrac{2x^{i-2}(hx-b)\sqrt{\Delta
}}{(x^{2}+bx+1)^2}\mathrm{d}x\\[-3pt]
&=\mu(h)S_{2}(h)+\nu(h)S_{3}(h)+\displaystyle\sum^{3}_{i=0}\bar{c}_{i0}R_{i}(h)+\sum^{n}_{i=5}\bar{c}_{i0}\omega_{i}(h)J_{i-3}(h),
\end{split}\end{equation}
where
$\bar{c}_{i0}, i=0,1,2,3,5,...,n,$ are
linear combinations of $c_{i0}, i=0,1,...,n$. $\mu(h)$, $\nu(h)$ and
$\omega_{i}(h)$ are polynomials of degree at most $1$, and
\begin{equation*}
R_{i}(h)=\displaystyle\int_{x_{1}}^{x_{2}}\dfrac{2x^{i-2}(hx-b)\sqrt{\Delta
}}{(x^{2}+bx+1)^2}\mathrm{d}x, \quad i=0,1,2,3.
\end{equation*}
A simple calculation shows that
\begin{equation}\begin{split}
R_{0}(h)&=-\dfrac{4(h-(2-b^2))(h-b^4+5b^2-2)\pi}{(4-b^2)^{3/2}},\\
R_{1}(h)&=\dfrac{2b(h-(2-b^2))(h-b^2+6)\pi}{(4-b^2)^{3/2}},\\
R_{2}(h)&=-\dfrac{4(h-(2-b^2))(h+2)\pi}{(4-b^2)^{3/2}},\\
R_{3}(h)&=-\dfrac{b(h+2)(b^2h-6h-2b^2+4)\pi}{(4-b^2)^{3/2}}-h\sqrt{4-h^2}\pi.
\end{split}\end{equation}
From (3.5), (3.9), (3.11) and (3.12), we obtain
\begin{equation}\begin{array}{ll}
M_{3}(h)=\bar{P}_{2}(h)+\displaystyle\dfrac{\bar{P}_{[(3n-7)/2]}(h)}{\sqrt{4-h^{2}}(2-h)^{n-4}(2+h)^{[(n-5)/2]}},
\end{array}\end{equation}
where $\bar{P}_{2}(h)$ and $\bar{P}_{[(3n-7)/2]}(h)$ are polynomials with $\deg \bar{P}_{2}\leq2$ and $\deg \bar{P}_{[(3n-7)/2]}\leq[(3n-7)/2]$, respectively.

\noindent{\bfseries{Proof of Theorem 2.}} It follows from Proposition 8, (3.2), (3.7), (3.10) and (3.13) that the Abelian integral is
\begin{equation}M(h)=\label{eq1}\left\{\begin{array}{ll}
Q_{2}(h)+\dfrac{Q_{[(3n-7)/2]}(h)}{\sqrt{4-h^{2}}(2-h)^{n-4}(2+h)^{[(n-5)/2]}} & \mbox{if $n\geq3$,}\\[2ex]
\widetilde{Q}_{2}(h) & \mbox{if $n\leq2$,}
\end{array} \right.
\end{equation}
where $Q_{2}(h)$, $\widetilde{Q}_{2}(h)$ and $Q_{[(3n-7)/2]}(h)$ denote polynomials with $\deg Q_{2}\leq{2}$, $\deg \widetilde{Q}_{2}\leq{2}$
and $\deg Q_{[(3n-7)/2]}\leq[(3n-7)/2]$, respectively.

If $n\leq2$, then the proof is similar to the proof of Theorem 1 and hence is omitted.

If $n\geq3$, in order to get the estimation of the number of zeros of $M(h)$, we take derivatives of $M(h)$ three
times. Noting that $M(2-b^2)=0$, it follows that
\begin{equation*}
H_{X_{2.10}}(n)\leq[\frac{3n-7}{2}]+3-1=[\frac{3n-3}{2}].
\end{equation*}

The proof is finished.

\section{$H_{X_{2.9}}(3)=3$ and $H_{X_{2.10}}(3)=3$}
In this section, we will study the sharp upper bound for the number of zeros of the Abelian integrals
with respect to the two systems for $n=3$, i.e., the determination of $H_{X_{2.9}}(3)$ and $H_{X_{2.10}}(3)$. For convenience,
we will denote
\[U^-=(-2,h_-), \quad U^+=(h_+,2).
\]

The exact expression of Abelian integral associated to system $X_{2.9}$ is given as follows:
\begin{equation}\begin{split}
M^{\pm}(h)&=\displaystyle\sum_{i+j=0}^{3}(-a_{ji}\bar{I}_{ij}(h)-b_{ij}I_{ij}(h))\\
&=a_{1}^{\pm}+a_{2}^{\pm}h+a_{3}^{\pm}h^2+a_{4}^{\pm}\sqrt{4-h^2}+a_{5}^{\pm}h\sqrt{4-h^2}, \quad h\in U^{\pm},
\end{split}\end{equation}
where
\begin{equation*}\begin{split}
a_{2}^{\pm}=&\displaystyle b_{00}\left(\pm\dfrac{2bc(6-b^2)\pi}{(4-b^2)^{3/2}}+
\dfrac{2(-2+c^2)\pi}{\sqrt{4-c^2}}\right)
+a_{00}\left(-\dfrac{2bc(-6+c^2)\pi}{(4-c^2)^{3/2}}\pm
\dfrac{2(-2+b^2)\pi}{\sqrt{4-b^2}}\right)\\
&-(b_{01}+a_{10})\left(\pm\dfrac{b\pi}{\sqrt{4-b^2}}+\dfrac{c\pi}{\sqrt{4-c^2}}\right)\pm\dfrac{2b_{11}\pi}{\sqrt{4-b^2}}+\dfrac{2a_{11}\pi}{\sqrt{4-c^2}}
\pm\dfrac{b(-b_{21}+a_{30})\pi}{\sqrt{4-b^2}}\\
&\pm\dfrac{4c(-2b_{10}+b_{20}b-2b_{30})\pi}{(4-b^2)^{3/2}}-\dfrac{4b(2a_{01}-a_{02}c+2a_{03})\pi}{(4-c^2)^{3/2}}
-\dfrac{c(a_{12}-b_{03})\pi}{\sqrt{4-c^2}},\\
a_{3}^{\pm}=&\displaystyle\pm\dfrac{4(b_{00}+b_{20})\pi}{(4-b^2)^{3/2}}+
\dfrac{4(a_{00}+a_{02})\pi}{(4-c^2)^{3/2}}\mp\dfrac{2b_{10}b\pi}{(4-b^2)^{3/2}}
-\dfrac{2a_{01}c\pi}{(4-c^2)^{3/2}}\\
&\pm\dfrac{b_{30}b(-6+b^2)\pi}{(4-b^2)^{3/2}}+\dfrac{a_{03}c(-6+c^2)\pi}{(4-c^2)^{3/2}},\\
a_{4}^{\pm}=&\pm\left(b_{21}-b_{03}+a_{12}-a_{30}\right)\pi,\\
a_{5}^{\pm}=&\pm(b_{30}+a_{03})\pi,
\end{split}\end{equation*}
and
\begin{equation}
a_{1}^{\pm}=m^{\pm}a_{2}^{\pm}+n^{\pm}a_{3}^{\pm}+p^{\pm}a_{4}^{\pm}+q^{\pm}a_{5}^{\pm}
\end{equation}
with
\begin{equation*}
m^{\pm}=-h_{\pm},\quad n^{\pm}=-h_{\pm}^{2},\quad
p^{\pm}=-\sqrt{4-h_{\pm}^{2}},\quad
q^{\pm}=-h_{\pm}\sqrt{4-h_{\pm}^{2}}.
\end{equation*}
Since $0\leq b<c<2$,
\begin{equation*}
\dfrac{\partial(a_{2}^{\pm},a_{3}^{\pm},a_{4}^{\pm},a_{5}^{\pm})}{\partial(a_{11},a_{01},a_{12},a_{03})}=-\dfrac{4c\pi^4}{(4-c^2)^2}\neq0,
\end{equation*}
which means that $a_{2}^{\pm},a_{3}^{\pm},a_{4}^{\pm},a_{5}^{\pm}$ are independent.

Denote
\[f_{0}^{\pm}(h)=h+m^{\pm},f_{1}^{\pm}(h)=h^2+n^{\pm},f_{2}^{\pm}(h)=\sqrt{4-h^2}+p^{\pm},f_{3}^{\pm}(h)=h\sqrt{4-h^2}+q^{\pm}.
\]
It follows from (4.1) and (4.2) that
\begin{equation}
M^{\pm}(h)=a_{2}^{\pm}f_{0}^{\pm}(h)+a_{3}^{\pm}f_{1}^{\pm}(h)+a_{4}^{\pm}f_{2}^{\pm}(h)+a_{5}^{\pm}f_{3}^{\pm}(h).
\end{equation}

By Theorem 1, we know that $H_{X_{2.9}}(3)\leq3$.
Now, we use \emph{Chebyshev
criterion} to show that $H_{X_{2.9}}(3)=3$, i.e., there exist the parameters $a_{i}^{\pm},
i=2,3,4,5$, such that $M^{\pm}(h)$ has exactly three zeros on
$U^{\pm}$, respectively. We introduce the following
definitions (see for instance \cite{MV}).

Let $f_{0},f_{1},...,f_{n-1}$ be analytic functions on an open
interval $L$ of $\mathbb{R}$. $(f_{0},f_{1},...,f_{n-1})$ is a
\emph{Chebyshev system} on $L$ if any nontrivial linear combination
\begin{equation*}
\lambda_{0}f_{0}(x)+\lambda_{1}f_{1}(x)+...+\lambda_{n-1}f_{n-1}(x)
\end{equation*}
has at most $n-1$ isolated zeros on $L$.

An ordered set $(f_{0},f_{1},...,f_{n-1})$ is a \emph{complete
Chebyshev system} on $L$ if
$(f_{0},f_{1},...,f_{i-1})$ is a Chebyshev system on $L$ for all
$i=1,2,...,n$.

An ordered set $(f_{0},f_{1},...,f_{n-1})$ is an \emph{extended
complete Chebyshev system} (in short, ECT-system) on $L$ if, for all
$i=1,2,...,n$, any nontrivial linear combination
\begin{equation}
\lambda_{0}f_{0}(x)+\lambda_{1}f_{1}(x)+...+\lambda_{i-1}f_{i-1}(x)
\end{equation}
has at most $i-1$ isolated zeros on $L$ counted with multiplicities.

\begin{remark}
If $(f_{0},f_{1},...,f_{n-1})$ is an ECT-system on $L$, then for
each $i=1,2,...,n$, there exists a linear of combination (4.4) with
exactly $i-1$ simple zeros on $L$ (see for instance Remark 3.7 in \cite{GV}).
\end{remark}

\begin{lemma}
\emph{(see \cite{MV})} $(f_{0},f_{1},...,f_{n-1})$ is an ECT-system on L
if, and only if, for each $i=1,2,...,n$,
\begin{equation*}
\Omega_{i}(x)=
\begin{vmatrix}
f_{0}(x) &  f_{1}(x) & \cdots &  f_{i-1}(x)\\
f_{0}^{\prime}(x) &f_{1}^{\prime}(x)  & \cdots & f_{i-1}^{\prime}(x) \\
\vdots & \vdots & \ddots & \vdots \\
f_{0}^{(i-1)}(x) & f_{1}^{(i-1)}(x) & \cdots & f_{i-1}^{(i-1)}(x) \\
  \end{vmatrix}\neq{0}
\end{equation*}
for all $x\in L$.
\end{lemma}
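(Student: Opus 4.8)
The plan is to prove the two implications separately: the forward direction is a short linear-algebra argument, while the converse requires an induction on $n$ that reduces the order of the system via a Wronskian identity and Rolle's theorem. For the direction $(\Rightarrow)$, assume $(f_{0},\dots,f_{n-1})$ is an ECT-system and suppose, for contradiction, that $\Omega_{i}(x_{0})=0$ for some $i\in\{1,\dots,n\}$ and some $x_{0}\in L$. Then the columns of the matrix defining $\Omega_{i}$ are linearly dependent, so there exist constants $\lambda_{0},\dots,\lambda_{i-1}$, not all zero, for which $g:=\sum_{k=0}^{i-1}\lambda_{k}f_{k}$ satisfies $g(x_{0})=g'(x_{0})=\dots=g^{(i-1)}(x_{0})=0$. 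Thus $g$ vanishes to order at least $i$ at $x_{0}$, i.e.\ it has at least $i$ zeros counted with multiplicity, contradicting the ECT-property at level $i$, which allows at most $i-1$. Hence every $\Omega_{i}$ is nonvanishing on $L$.

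For the harder direction $(\Leftarrow)$ I would induct on $n$. The base case $n=1$ is immediate: $\Omega_{1}=f_{0}\neq0$ on $L$ says $f_{0}$ has no zeros, which is exactly the ECT-property for a single function. For the inductive step, since $\Omega_{1}=f_{0}\neq0$ I may divide by $f_{0}$ and introduce the auxiliary system $g_{k}:=(f_{k+1}/f_{0})'$, $k=0,\dots,n-2$, together with the classical identity
\[
W(g_{0},\dots,g_{k-1})=\frac{W(f_{0},\dots,f_{k})}{f_{0}^{\,k+1}},
\]
which I would verify by elementary row and column operations on the Wronskian matrix. Since each $W(f_{0},\dots,f_{k})=\Omega_{k+1}$ is nonzero by hypothesis and $f_{0}\neq0$, every Wronskian of $(g_{0},\dots,g_{n-2})$ is nonzero, so by the induction hypothesis this shorter system is an ECT-system on $L$. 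Now take any nontrivial $g=\sum_{j=0}^{n-1}\lambda_{j}f_{j}$. Writing $g/f_{0}=\lambda_{0}+\sum_{j=1}^{n-1}\lambda_{j}(f_{j}/f_{0})$, its derivative $(g/f_{0})'=\sum_{k=0}^{n-2}\lambda_{k+1}g_{k}$ is a linear combination of the $g_{k}$. If this combination is nontrivial it has at most $n-2$ zeros with multiplicity, and the generalized Rolle theorem then gives that $g/f_{0}$ has at most $n-1$; since $f_{0}$ never vanishes, $g$ has the same bound. If instead $\lambda_{1}=\dots=\lambda_{n-1}=0$, then $g=\lambda_{0}f_{0}$ has no zeros. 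Either way $g$ has at most $n-1$ zeros counted with multiplicity, establishing the ECT-property at the top level $i=n$; applying the identical argument to each truncation $(f_{0},\dots,f_{i-1})$, whose Wronskians $\Omega_{1},\dots,\Omega_{i}$ are nonzero by assumption, yields the ECT-property at every level $i$.

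The step I expect to demand the most care is the Rolle argument with multiplicities: I must confirm that if $h\in C^{1}(L)$ has $N$ zeros counted with multiplicity, then $h'$ has at least $N-1$, combining the interlacing zeros furnished by Rolle's theorem between distinct roots with the order-$(\mu-1)$ zero of $h'$ at each root of $h$ of multiplicity $\mu$. Analyticity of the $f_{j}$ guarantees that all zero sets are isolated, hence finite on compact subintervals, so these counts are well defined and the bookkeeping is legitimate. The Wronskian identity, although standard, also requires careful tracking of the factor $f_{0}^{\,k+1}$; I would either prove it by a subsidiary induction on $k$ carried out in parallel with the main induction, or invoke it as a known fact about Wronskians of quotients.
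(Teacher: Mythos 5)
The paper offers no proof of this lemma: it is quoted verbatim from \cite{MV} (and ultimately goes back to the classical theory of Karlin--Studden), so there is no in-paper argument to compare yours against. Your proof is the standard one and is essentially correct: the forward direction via a column dependence producing a zero of multiplicity $i$, and the converse by induction using the reduction-of-order identity $W\bigl((f_{1}/f_{0})',\dots,(f_{k}/f_{0})'\bigr)=W(f_{0},\dots,f_{k})/f_{0}^{\,k+1}$ together with Rolle's theorem counted with multiplicities, is exactly how the result is proved in the literature, and your bookkeeping for Rolle (the $r-1$ interlacing zeros are distinct from the $\sum(\mu_{i}-1)$ zeros inherited at the roots themselves, giving $N-1$ in total) is right. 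Two small points deserve explicit mention if you write this up. First, in the forward direction you should observe that the combination $g$ produced by the column dependence is not identically zero: if it were, the $f_{k}$ would be linearly dependent and $\Omega_{i}$ would vanish identically, so one either adds the (easy) remark that nonvanishing of the Wronskians forces linear independence, or reads ``nontrivial'' as referring to the coefficients and notes that an identically zero $g$ cannot occur here. Second, in the converse you should state that a zero of $g$ of multiplicity $\mu$ is a zero of $g/f_{0}$ of the same multiplicity precisely because $f_{0}$ is nonvanishing and analytic; you assert the conclusion but this is the step that transfers the bound from $g/f_{0}$ back to $g$. With those clarifications the argument is complete and supplies a self-contained proof of a fact the paper only cites.
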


Simple computations show that, if $h\in U^{\pm}$,
\begin{equation*}\begin{array}{ll}
\Omega_{1}^{\pm}(h)=h-h_{\pm},\\[2ex]
\Omega_{2}^{\pm}(h)=(h-h_{\pm})^2,\\[2ex]
\Omega_{3}^{\pm}(h)=\displaystyle\dfrac{2}{(4-h^2)^{3/2}}
\left(h_{\pm}h^3-6h^2+16-2h^2_{\pm}\right)-2\sqrt{4-h^2_{\pm}},\\[2ex]
\Omega_{4}^{\pm}(h)=-\displaystyle\dfrac{24}{(4-h^2)^{3}}
\left((h_{\pm}^2-2)h^2-4h_{\pm}h+16-2h^2_{\pm}+\sqrt{4-h^2_{\pm}}\sqrt{4-h^2}(-4+h_{\pm}h)\right).
\end{array}\end{equation*}

Obviously, $\Omega_{1}^{\pm}(h)$ and $\Omega_{2}^{\pm}(h)$ do not vanish when $h\in U^{\pm}$. By direct computation, we obtain that $\Omega_{4}^{\pm}(h)$ has
only one zero $h=h_{\pm}$ with multiplicity four, which implies that
$\Omega_{4}^{\pm}(h)$ also does not change sign in each interval. However, it is a
little difficult to judge the sign of $\Omega_{3}^{\pm}(h)$.

\begin{proposition}
$(f_{0}^-(h),f_{1}^-(h),f_{2}^-(h),f_{3}^-(h))$ is an
ECT-system on $U^-$. If
$b^2+c^2\leq 4$, $(f_{0}^+(h),f_{1}^+(h),f_{2}^+(h),f_{3}^+(h))$ is an
ECT-system on $U^+$; if $b^2+c^2>4$, there is a real number
$d\in U^+$ such that $(f_{0}^+(h),f_{1}^+(h),f_{2}^+(h),f_{3}^+(h))$ is
an ECT-system on $(h_{+},d)$.
\end{proposition}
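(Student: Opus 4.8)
The plan is to apply Lemma~14 (the Wronskian criterion) to the ordered set $(f_0^{\pm},f_1^{\pm},f_2^{\pm},f_3^{\pm})$, that is, to show that all four Wronskians $\Omega_1^{\pm},\Omega_2^{\pm},\Omega_3^{\pm},\Omega_4^{\pm}$ are nonvanishing on the relevant interval. Three of these are already disposed of in the text preceding the proposition: $\Omega_1^{\pm}(h)=h-h_{\pm}$ and $\Omega_2^{\pm}(h)=(h-h_{\pm})^2$ never vanish on $U^{\pm}$ because $h_{\pm}\notin U^{\pm}$, and $\Omega_4^{\pm}$ has been shown to have $h=h_{\pm}$ as its only (fourfold) root, hence keeps a constant sign on each interval. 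So the entire content of the proof reduces to analyzing the single remaining Wronskian $\Omega_3^{\pm}(h)$, which is exactly the function the authors flagged as ``a little difficult.''

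**Reducing $\Omega_3^{\pm}$ to a sign question.**
First I would clear the positive factor $2/(4-h^2)^{3/2}$ from $\Omega_3^{\pm}$ and study the sign of the numerator
\[
N^{\pm}(h)=h_{\pm}h^3-6h^2+16-2h_{\pm}^2-\sqrt{4-h_{\pm}^2}\,(4-h^2)^{3/2}.
\]
The natural first step is to check the boundary behaviour: one evaluates $N^{\pm}$ at the endpoint $h=h_{\pm}$ (where by construction the algebraic part should match the square-root part so that $N^{\pm}(h_{\pm})=0$) and at the other endpoint of $U^{\pm}$ (namely $h=\pm2$, where $(4-h^2)^{3/2}$ vanishes and $N^{\pm}$ is a pure cubic that can be signed explicitly). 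To control the sign in between I would differentiate: write $N^{\pm}(h)=A^{\pm}(h)-B^{\pm}(h)$ with $A^{\pm}$ the polynomial part and $B^{\pm}(h)=\sqrt{4-h_{\pm}^2}\,(4-h^2)^{3/2}$, and study $A^{\pm}-B^{\pm}$ through its derivatives, or equivalently isolate the radical and compare $\big(A^{\pm}(h)\big)^2$ with $\big(B^{\pm}(h)\big)^2=(4-h_{\pm}^2)(4-h^2)^3$, reducing the transcendental inequality to a polynomial inequality in $h$ with parameter $h_{\pm}$. Here is where the two intervals diverge: on $U^-=(-2,h_-)$ the sign works out unconditionally, whereas on $U^+=(h_+,2)$ the sign of $N^{+}$ depends on the parameters, and the threshold $b^2+c^2=4$ enters precisely as the condition under which $N^{+}$ retains one sign throughout $U^+$ versus acquiring an interior zero $d$.

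**The main obstacle and how to handle it.**
The hard part will be the $U^+$ case with $b^2+c^2>4$: I expect the squared polynomial inequality to change sign exactly once in $(h_+,2)$, and I must show (i) that there is a unique such crossing $d$, and (ii) that $h_+<d<2$ strictly, so that $\Omega_3^+$ is nonzero on the truncated interval $(h_+,d)$. For (i) I would argue that the relevant polynomial difference $\big(A^{+}\big)^2-(4-h_{+}^2)(4-h^2)^3$, after removing the known root at $h=h_+$, has a single remaining root in the interval — e.g.\ by exhibiting it as a monotone function there or by a discriminant/Descartes sign-count in terms of $h_+$. The condition $b^2+c^2\le 4$ versus $>4$ should be recast in terms of $h_+$ using the relation $h_{\pm}=(-bc\pm\gamma\delta)/2$ from the introduction, so that the dichotomy in the statement becomes a transparent threshold on $h_+$; the bookkeeping of keeping every step equivalent (rather than merely implied) when squaring the radical is the part demanding the most care, since squaring can introduce spurious roots that must be excluded by the endpoint sign data collected in the first step.
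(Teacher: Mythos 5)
Your outline coincides in structure with the paper's proof: apply the Wronskian criterion (Lemma 10), note that $\Omega_1^{\pm}$, $\Omega_2^{\pm}$, $\Omega_4^{\pm}$ are already controlled, reduce everything to the sign of $\Omega_3^{\pm}$, and translate the dichotomy $b^2+c^2\lessgtr 4$ into $h_+\gtrless 0$ via $h_+=(-bc+\gamma\delta)/2$. The genuine gap is that the one step which actually carries the proof is left as an expectation: you never establish the monotonicity of $\Omega_3^{\pm}$ (or of your numerator $N^{\pm}$), so the assertions that ``the sign works out unconditionally'' on $U^-$ and that there is ``exactly one crossing'' on $U^+$ are unsupported. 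The paper's argument rests on a single identity you have not produced: differentiating $\Omega_3^{\pm}$ itself (rather than the numerator) gives ${\Omega_3^{\pm}}^{\prime}(h)=-12h(h-h_{\pm})^2/(4-h^2)^{5/2}$, so the sign of the derivative is just $-\operatorname{sign}(h)$. Combined with $\Omega_3^{\pm}(h_{\pm})=0$ this settles everything in two lines: on $U^-=(-2,h_-)$ with $h_-<0$ the derivative is positive, hence $\Omega_3^-<0$ throughout; on $U^+$ with $h_+\ge 0$ it is negative, hence $\Omega_3^+<0$ throughout; and when $h_+<0$ the derivative changes sign at $h=0$, where $\Omega_3^+(0)=\bigl(\sqrt{4-h_+^2}-2\bigr)^2/2>0$ while $\Omega_3^+(h)\to-\infty$ as $h\to 2^-$, producing exactly one interior zero $d$.

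Of your two proposed routes, the derivative route can be completed: one finds $\frac{d}{dh}N^{\pm}(h)=3h\bigl(h_{\pm}h-4+\sqrt{4-h_{\pm}^2}\sqrt{4-h^2}\bigr)$, and the second factor is $\le 0$ on $(-2,2)$ by Cauchy--Schwarz, which recovers the same sign pattern; but this factorization must actually be exhibited, since a priori the derivative of ``cubic minus $(4-h^2)^{3/2}$'' has no reason to have a determined sign. The squaring route is the weaker choice: $\bigl(A^{\pm}(h)\bigr)^2-(4-h_{\pm}^2)(4-h^2)^3$ is a degree-six polynomial with a triple root at $h=h_{\pm}$ (since $\Omega_3^{\pm}$ vanishes there to order three), it only compares $|A^{\pm}|$ with $B^{\pm}$ rather than giving the sign of $A^{\pm}-B^{\pm}$, and the root count you defer to ``a discriminant/Descartes sign-count'' is precisely the content of the proposition. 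As written, the proposal is a correct plan pointed at the right object, but the decisive computation is missing.
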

\begin{proof}
It is easy to verify that $\Omega_3^{\pm}(h_{\pm})=0$ and
\begin{equation}\label{f3}
{\Omega_3^{\pm}}^{\prime}(h)=-\displaystyle\dfrac{12h(h-h_{\pm})^2}{(4-h^2)^{5/2}}.
\end{equation}
Thus ${\Omega_3^{\pm}}^{\prime}(h)$ has zeros $h=0$ and $h=h_{\pm}$ with
multiplicity one and two, respectively.

Note that $U^-=(-2,h_-)$ and $h_{-}<0$. If $h\in U^-$, by \eqref{f3}, we find that ${\Omega_3^{-}}^{\prime}(h)>0$ holds, which implies that
$\Omega_{3}^-(h)<\Omega_3^-(h_{-})=0$. Therefore combining with the discussions above and lemma 10,
$(f_{0}^-(h),f_{1}^-(h),f_{2}^-(h),f_{3}^-(h))$ is an ECT-system on
$U^-$.

We split the proof into two cases if $h\in U^+$, i.e., $h\in(h_{+},2)$.

\emph{Case 1}. $h_{+}\geq0$, i.e., $b^2+c^2\leq4$.

By \eqref{f3}, ${\Omega_3^+}^{\prime}(h)<0$ holds on $U^+$. It follows that
$\Omega_{3}^+(h)<\Omega_3^+(h_{+})=0$. Thus $\Omega_{3}^+(h)$ does not vanish on $U^+$, which shows that
$(f_{0}^+(h),f_{1}^+(h),f_{2}^+(h),f_{3}^+(h))$ is an ECT-system on
$U^+$.

\emph{Case 2}. $h_{+}<0$, i.e., $b^2+c^2>4$.

We get ${\Omega_3^+}^{\prime}(h)>0$ for $h\in(h_{+},0)$ and ${\Omega_3^+}^{\prime}(h)<0$ for $h\in(0,2)$ by \eqref{f3}. Noting that $\Omega_3^+(h_{+})=0$,
$\Omega_3^+(0)=(\small\sqrt{4-h_{+}^2}-2)^2/2>0$ and $\Omega_{3}^+(h)\rightarrow{-\infty}$ as $h\rightarrow2^{-}$, we know that there is a simple zero of $\Omega_{3}^+(h)$
on $(0,2)$. Thus $(f_{0}^+(h),f_{1}^+(h),f_{2}^+(h),f_{3}^+(h))$ is not
an ECT-system on $U^+$ by lemma 10. But there is a real number
$d\in U^+$ such that $\Omega_{3}^+(h)>0$ for $h\in(h_{+},d)$. Consequently,
$(f_{0}^+(h),f_{1}^+(h),f_{2}^+(h),f_{3}^+(h))$ is an ECT-system on
$(h_{+},d)$.

The proof is finished.
\end{proof}

Next, let us consider system $X_{2.10}$.

Similarly, the Abelian integral associated to system $X_{2.10}$ has
the form:
\begin{equation}\begin{split}
M(h)&=\displaystyle\sum_{i+j=0}^{3}(-a_{ji}-b_{ij})I_{ij}(h)\\
&=a_{1}+a_{2}h+a_{3}h^2+a_{4}\sqrt{4-h^2}+a_{5}h\sqrt{4-h^2}, \quad h\in(2-b^2,2),
\end{split}\end{equation}
where
\begin{equation*}\begin{split}
a_{2}=&-\displaystyle\dfrac{4(b_{00}+a_{00})(4-6b^2+b^4)\pi}{(4-b^2)^{3/2}}
-\dfrac{8(b_{10}+a_{01}+b_{30}+a_{03})b\pi}{(4-b^2)^{3/2}}
-\dfrac{2(b_{01}+a_{10})b\pi}{\sqrt{4-b^2}}\\
&+\dfrac{4(b_{20}+a_{02})b^2\pi}{(4-b^2)^{3/2}}+\dfrac{2(b_{11}+a_{11})\pi}{\sqrt{4-b^2}}
-\dfrac{(b_{21}+a_{12}-b_{03}-a_{30})b\pi}{\sqrt{4-b^2}},
\end{split}\end{equation*}
\begin{equation*}\begin{split}
a_{3}&=\displaystyle\dfrac{4(b_{00}+b_{20}+a_{00}+a_{02})\pi}{(4-b^2)^{3/2}}-\dfrac{2(b_{10}+a_{01})b\pi}{(4-b^2)^{3/2}}
+\dfrac{(b_{30}+a_{03})b(-6+b^2)\pi}{(4-b^2)^{3/2}},\\
a_{4}&=(b_{21}-b_{03}+a_{12}-a_{30})\pi,\\
a_{5}&=(b_{30}+a_{03})\pi,
\end{split}\end{equation*}
and
\begin{equation}\begin{split}
a_{1}=ma_{2}+na_{3}+pa_{4}+qa_{5},
\end{split}\end{equation}
with
\begin{equation*}
m=-2+b^2, \quad n=-(-2+b^2)^2,\quad
p=-b\sqrt{4-b^2}, \quad q=b\sqrt{4-b^2}(-2+b^2).
\end{equation*}
It follows from
\begin{equation*}
\dfrac{\partial(a_{2},a_{3},a_{4},a_{5})}{\partial(a_{11},a_{01},a_{12},a_{03})}=
-\dfrac{4b\pi^4}{(4-b^2)^2}\neq0
\end{equation*}
that $a_{2},a_{3},a_{4},a_{5}$ are independent.

Denote

$f_{0}(h)=h+m, f_{1}(h)=h^2+n, f_{2}(h)=\sqrt{4-h^2}+p, f_{3}(h)=h\sqrt{4-h^2}+q$.\\
By (4.6) and (4.7),
\begin{equation}
M(h)=a_{2} f_{0}(h)+a_{3}f_{1}(h)+a_{4} f_{2}(h)+a_{5} f_{3}(h).
\end{equation}

\begin{proposition}
When $0<b\leq\sqrt{2}$, $( f_{0}(h), f_{1}(h), f_{2}(h), f_{3}(h))$ is an
ECT-system on $(2-b^2,2)$; when $\sqrt{2}<b<2$, there is a real number
$d\in(2-b^2,2)$ such that $( f_{0}(h), f_{1}(h), f_{2}(h), f_{3}(h))$ is
an ECT-system on $(2-b^2,d)$.
\end{proposition}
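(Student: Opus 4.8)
The plan is to follow exactly the route of the proof of Proposition 11, exploiting the fact that the four functions $f_{0},f_{1},f_{2},f_{3}$ attached to system $X_{2.10}$ have the same shape as the functions $f_{i}^{\pm}$ attached to $X_{2.9}$, with the center value $h_{c}:=2-b^{2}$ now playing the role of $h_{\pm}$. First I would record the elementary identity $4-(2-b^{2})^{2}=b^{2}(4-b^{2})$, which, since $b>0$, gives $\sqrt{4-h_{c}^{2}}=b\sqrt{4-b^{2}}$. Substituting this into the constants $m,n,p,q$ defined before (4.9) yields $m=-h_{c}$, $n=-h_{c}^{2}$, $p=-\sqrt{4-h_{c}^{2}}$ and $q=-h_{c}\sqrt{4-h_{c}^{2}}$, so that $f_{0},f_{1},f_{2},f_{3}$ coincide formally with $f_{0}^{\pm},f_{1}^{\pm},f_{2}^{\pm},f_{3}^{\pm}$ after the replacement $h_{\pm}\mapsto h_{c}$. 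Consequently the Wronskians $\Omega_{1},\ldots,\Omega_{4}$ are obtained from those displayed before Proposition 11 by the very same substitution.

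With this dictionary in hand I would invoke Lemma 10 and check the four Wronskians on the period annulus $(2-b^{2},2)=(h_{c},2)$. The first two are immediate: $\Omega_{1}(h)=h-h_{c}$ and $\Omega_{2}(h)=(h-h_{c})^{2}$ never vanish for $h>h_{c}$. The fourth, $\Omega_{4}(h)$, inherits from Proposition 11 its only zero at $h=h_{c}$ with multiplicity four, and since $h_{c}=2-b^{2}$ is the left endpoint of the interval, $\Omega_{4}$ keeps a constant sign on the open annulus. Thus, exactly as before, the whole matter reduces to controlling the sign of $\Omega_{3}$.

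For $\Omega_{3}$ I would use $\Omega_{3}(h_{c})=0$ together with the derivative identity, which under the substitution reads $\Omega_{3}'(h)=-12h(h-h_{c})^{2}/(4-h^{2})^{5/2}$; its sign is governed solely by the sign of $h$, and this is where the two cases split according to the position of the center. When $0<b\le\sqrt{2}$ we have $h_{c}=2-b^{2}\ge0$, so every $h\in(h_{c},2)$ is positive, whence $\Omega_{3}'(h)<0$ and $\Omega_{3}(h)<\Omega_{3}(h_{c})=0$ throughout; $\Omega_{3}$ is nonvanishing and Lemma 10 gives the ECT-system on $(2-b^{2},2)$. When $\sqrt{2}<b<2$ we have $h_{c}<0$, so $\Omega_{3}$ increases on $(h_{c},0)$ and decreases on $(0,2)$; combined with $\Omega_{3}(h_{c})=0$, the positive interior value $\Omega_{3}(0)=(\sqrt{4-h_{c}^{2}}-2)^{2}/2>0$ (strictly positive precisely because $b\ne\sqrt{2}$), and $\Omega_{3}(h)\to-\infty$ as $h\to2^{-}$, this forces exactly one simple zero of $\Omega_{3}$ in $(0,2)$. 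Taking $d$ to be any point below that zero makes $\Omega_{3}>0$ on $(2-b^{2},d)$, and Lemma 10 then yields the ECT-system on $(2-b^{2},d)$.

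The main obstacle, as in Proposition 11, is pinning down the sign of $\Omega_{3}$: once the derivative identity and the boundary and interior values $\Omega_{3}(h_{c})=0$, $\Omega_{3}(0)>0$, $\Omega_{3}(2^{-})=-\infty$ are in place, the two-case monotonicity argument is routine. The only genuinely new verification is the formal identification of the constants through $4-(2-b^{2})^{2}=b^{2}(4-b^{2})$; everything else is inherited verbatim from the $X_{2.9}$ analysis.
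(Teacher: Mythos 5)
Your proof is correct and takes essentially the same route as the paper's: verifying the four Wronskians via Lemma 10, with $\Omega_{1},\Omega_{2},\Omega_{4}$ handled immediately and the sign of $\Omega_{3}$ settled by the same monotonicity argument (split at $h=0$, i.e.\ at $b=\sqrt{2}$) used in Proposition 11. Your explicit dictionary $m=-h_{c}$, $n=-h_{c}^{2}$, $p=-\sqrt{4-h_{c}^{2}}$, $q=-h_{c}\sqrt{4-h_{c}^{2}}$ with $h_{c}=2-b^{2}$, via $4-(2-b^{2})^{2}=b^{2}(4-b^{2})$, is simply a precise way of justifying the paper's remark that the discussion of $\Omega_{3}$ is ``similar to the proof of Proposition 11,'' and it checks out against the displayed Wronskians.
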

\begin{proof}
By direct computations,
\begin{equation*}\begin{split}
\Omega_{1}(h)=&h+b^2-2,\\
\Omega_{2}(h)=&(h+b^2-2)^2,\\
\Omega_{3}(h)=&\displaystyle\dfrac{2}{(4-h^2)^{3/2}}
\left((2-b^2)h^3-6h^2-2(-4-4b^2+b^4)\right)-2b\sqrt{4-b^2},\\
\Omega_{4}(h)=&-\displaystyle\dfrac{24}{(4-h^2)^{3}}
\left((2-4b^2+b^4)h^2-4(2-b^2)h+8+8b^2-2b^4\right.\\
&\left.+b\sqrt{4-b^2}\sqrt{4-h^2}(-4+(2-b^2)h)\right),
\end{split}\end{equation*}
where $h\in(2-b^2,2)$.

Obviously, $\Omega_{1}(h)$ and $\Omega_{2}(h)$ do not vanish on $(2-b^2,2)$. It is easy to verify that $h=2-b^2$
is a unique zero of $\Omega_{4}(h)$ with multiplicity four, which implies that $\Omega_{4}(h)\neq0$ for $h\in(2-b^2,2)$.
Therefore we just need to determine the sign of $\Omega_{3}(h)$. The discussion is similar to the proof of Proposition 11 for $h\in U^+$, thus we omit it.
\end{proof}
\noindent{\bfseries{Proof of Theorem 4.}} It follows from Theorem 1
that $H_{X_{2.9}}(3)\leq3$. By Proposition 11 and
Remark 9, we know that there exist $a_{i}^{\pm}, i=2,3,4,5$, such that the Abelian integral
$M^{\pm}(h)$ has exactly three zeros on $U^{\pm}$,
respectively. Therefore $H_{X_{2.9}}(3)=3$.

Similarly, $H_{X_{2.10}}(3)=3$ by Theorem 2, Proposition 12 and
Remark 9.
\section{Simultaneous bifurcation and distribution of limit cycles for system $X_{2.9}$}
In what follows we will consider the simultaneous bifurcation of limit cycles
bifurcating from the two period annuli of system $X_{2.9}$ under cubic polynomial perturbations. Note that $a_4^-=-a_4^+$ and $a_5^-=-a_5^+$.
Rewrite (4.3) as
\begin{equation}
M^{\pm}(h)=a_{2}^{\pm}f_{0}^{\pm}(h)+a_{3}^{\pm}f_{1}^{\pm}(h)\pm a_{4}^{+}f_{2}^{\pm}(h)\pm a_{5}^+f_{3}^{\pm}(h),\quad h\in U^{\pm}.
\end{equation}

\noindent{\bfseries{Proof of Theorem 5.}} We study the number of zeros of the two Abelian integrals simultaneously. Note that
\[
\dfrac{\partial(a_{2}^+,a_{2}^-,
a_{3}^+,a_{3}^-,a_{4}^+,a_{5}^+)}{\partial(a_{11},b_{11},b_{00},a_{01},b_{21},b_{30})}=\dfrac{128c{\pi}^6}{(4-b^2)^2(4-c^2)^2}\neq0.
\]
Thus, we can consider $a_{2}^+,a_{2}^-,a_{3}^+,a_{3}^-,a_{4}^+,a_{5}^+$ to be independent.

Firstly, let $a_{3}^+=a_{3}^-=a_{4}^+=a_{5}^+=0$ and $a_{2}^+, a_{2}^-\neq0$, then neither $M^+(h)$ has zeros on $U^+$ nor
$M^-(h)$ has zeros on
$U^-$. Hence, the distribution (0,0) is
possible.

Secondly, let $a_{4}^+=a_{5}^+=0$. By Proposition 11 and Remark 9, we can choose $a_{2}^+,
a_{3}^+$ (resp. $a_{2}^-, a_{3}^-$) such that $M^+(h)$
(resp. $M^-(h)$) has none or one zero on $U^+$
(resp. $U^-$). Thus the distributions (1,0), (0,1) and (1,1) can be
achieved.

Thirdly, take $a_{5}^+=0$. It follows from Proposition 11 and Remark 9 that there exist
$\rho_{1}, \rho_{2}, \rho_{3}(\neq0)$ and $\sigma_{1}, \sigma_{2}, \sigma_{3}(\neq0)$ such that
\[
\rho_{1}f_{0}^{+}(h)+\rho_{2}f_{1}^{+}(h)+\rho_{3}f_{2}^{+}(h)\quad \mbox{and} \quad
\sigma_{1}f_{0}^{-}(h)+\sigma_{2}f_{1}^{-}(h)-\sigma_{3}f_{2}^{-}(h)
\]
have $u$, $v$ zeros respectively, with $0\leq u, v\leq2$. Then multiplying by
$\rho_{3}/\sigma_{3}$ the
second function, it turns out that
\[
\sigma_{1}\rho_{3}/\sigma_{3}f_{0}^{-}(h)
+\sigma_{2}\rho_{3}/\sigma_{3}f_{1}^{-}(h)-
\rho_{3}f_{2}^{-}(h)
\]
has $v$ zeros.
Choose $a_{2}^+,
 a_{2}^-,a_{3}^+, a_{3}^-,a_{4}^+$ such that
\begin{equation*}
a_{2}^+=\rho_{1}, \quad
a_{2}^-=\sigma_{1}\rho_{3}/\sigma_{3},\quad
a_{3}^+=\rho_{2},\quad
a_{3}^-=\sigma_{2}\rho_{3}/\sigma_{3},\quad
a_{4}^+=\rho_{3}.
\end{equation*}
It follows that the distribution $(u,v), 0\leq u, v\leq2$ is
possible.

Finally, suppose that $a_{5}^+\neq0$. It is easy to obtain that
\[
{M^{+}}^{(3)}(h)=-\dfrac{12(a_4^+h+4a_5^+)}{(4-h^2)^{5/2}} \quad \mbox{and} \quad {M^{-}}^{(3)}(h)=\dfrac{12(a_4^+h+4a_5^+)}{(4-h^2)^{5/2}}.
\]
If $a_{4}^+=0$, then $M^+(h)$ and $M^-(h)$ have at most two zeros on $U^+$ and $U^-$, respectively. Thus, in order to get more limit cycles, $a_{4}^+\neq0$.
We find that ${M^{+}}^{(3)}(h)$ and ${M^{-}}^{(3)}(h)$ have the same unique zero $h_0=-4a_5^+/a_4^+$, which implies that  ${M^{+}}^{(2)}(h)$ has at most two (resp. one) zero(s) on $U^+$ and ${M^{-}}^{(2)}(h)$ has at most one (resp. two) zero(s) on $U^-$ if $h_0\in U^+$ (resp. $h_0\in U^-$), otherwise, both ${M^{+}}^{(2)}(h)$ and ${M^{-}}^{(2)}(h)$ have at most one zero on $U^+$ and $U^-$, respectively. Thus, $M^{+}(h)$ has at most three (resp. two) zeros on $U^+$ and $M^{-}(h)$ has at most two (resp. three) zeros on $U^-$, which means the distribution (3,3) is impossible. To show (3,2) is achievable, we give the following asymptotic expansions of $M^{\pm}(h)$
at $h=h_{\pm}$, respectively:
\begin{equation}\begin{split}
M^{\pm}(h)=s_1^{\pm}(h-h_{\pm})+s_2^{\pm}(h-h_{\pm})^2+s_3^{\pm}(h-h_{\pm})^3+s_4^{\pm}(h-h_{\pm})^4+\cdots,
\end{split}\end{equation}
where
\begin{equation*}\begin{split}
s_1^{\pm}&=a_2^{\pm}+2a_3^{\pm}h_{\pm}\mp\frac{a_4^+h_{\pm}+2a_5^+(h_{\pm}^2-2)}{(4-h_{\pm}^2)^{1/2}}, \quad
s_2^{\pm}=a_3^{\pm}\mp\frac{2a_4^+-a_5^+h_{\pm}(h_{\pm}^2-6)}{(4-h_{\pm}^2)^{3/2}},\\
s_3^{\pm}&=\mp\dfrac{2(a_4^+h_{\pm}+4a_5^+)}{(4-h_{\pm}^2)^{5/2}},\quad \quad \quad \quad \quad \quad \quad \quad \quad \,
s_4^{\pm}=\mp\dfrac{2(a_4^++5a_5^+h_{\pm}+a_4^{+}h_{\pm}^2)}{(4-h_{\pm}^2)^{7/2}}.
\end{split}\end{equation*}
Since
\[
\dfrac{\partial(s_{1}^+,s_{1}^-,
s_{2}^+,s_{2}^-,s_{3}^+,s_{4}^+)}{\partial(a_{2}^+,a_{2}^-,
a_{3}^+,a_{3}^-,a_{4}^+,a_{5}^+)}=-\dfrac{4}{(4-h_{+}^2)^5}\neq0,
\]
we consider $s_{1}^+,s_{1}^-,
s_{2}^+,s_{2}^-,s_{3}^+,s_{4}^+$ as the new independent parameters. Denote
\[
M^+(h)=M^+(h,s_{1}^+,s_{2}^+,s_{3}^+,s_{4}^+) \quad \mbox{and} \quad M^-(h)=M^-(h,s_{1}^-,s_{2}^-,s_{3}^-,s_{4}^-).
\]
Without loss of
generality, suppose $s_{4}^+>0$. To get more zeros of $M^+(h)$, we choose $s_{i}^+$ and $h_{i}\in U^+$, $i=
4, 3, 2, 1$, such that
\begin{equation}\begin{split}\label{M^+}
&M^+(h_4, 0, 0, 0, s_{4}^+)>0, \quad \quad \,M^+(h_3,0,0,s_{3}^+,s_{4}^+)<0,\\
&M^+(h_2,0,s_{2}^+,s_{3}^+,s_{4}^+)>0, \quad M^+(h_1,s_{1}^+,s_{2}^+,s_{3}^+,s_{4}^+)<0,
\end{split}\end{equation}
and $0 < |s_{1}^+|\ll|s_{2}^+|\ll |s_{3}^+|\ll |s_{4}^+|$, $h_+ < h_1 < h_2< h_3 < h_4 < 2$. It is
easy to show that $M^+(h)$ has three zeros which tend to $h_+$. Once $s_{3}^+,s_{4}^+$ are chosen, the sign of $s_3^-$ is
determined. For example, from the analysis above, we know that $s_{4}^+>0$ and $s_{3}^+<0$ by \eqref{M^+}. The result that $M^+(h)$ has three zeros on $U^+$ implies $h_0=-4a_5^+/a_4^+\in U^+$. Thus $a_4^+<0$ and $s_{3}^->0$.
There exists $h_5\in U^-$ such that $M^-(h_5,0,0,s_{3}^-,s_{4}^-)<0$. Take $s_{i}^-, i=2,1$ and
$h_{i}\in U^-, i=6,7$, such that
\[
M^-(h_6,0,s_{2}^-,s_{3}^-,s_{4}^-)>0, \quad M^-(h_7,s_{1}^-,s_{2}^-,s_{3}^-,s_{4}^-)<0,
\]
and $0 < |s_{1}^-|\ll|s_{2}^-|\ll \mbox{min}\{|s_{3}^-|,|s_{4}^-|\}$, $-2 < h_5 < h_6< h_7 < h_-$. It follows that
$M^-(h)$ has at least two zeros on $U^-$, which tend to $h_-$. Thus, (3,2) is realizable. Similarly, other configurations of limit cycles $(u,v)$ with $0\leq u, v\leq 3, u+v\leq5$
can be realized in this way.

This completes the proof.
\section*{Acknowledgements}
Research is supported by the International Program of Project 985, Sun Yat-Sen University, the NSF of China (No. 11171355) and the Ph.D. Programs Foundation of Ministry of Education of China (No. 20100171110040).

\end{document}